\theoremstyle{plain}
\newtheorem{thrm}{Theorem}[section]
\newtheorem{lemma}[thrm]{Lemma}
\newtheorem{prop}[thrm]{Proposition}
\newtheorem{dfn}[thrm]{Definition}
\begin{document}

\newcommand{\SL}{\mathcal L^{1,p}( D)}
\newcommand{\Lp}{L^p( Dega)}
\newcommand{\CO}{C^\infty_0( \Omega)}
\newcommand{\Rn}{\mathbb R^n}
\newcommand{\Rm}{\mathbb R^m}
\newcommand{\R}{\mathbb R}
\newcommand{\Om}{\Omega}
\newcommand{\Hn}{\mathbb H^n}
\newcommand{\aB}{\alpha B}
\newcommand{\eps}{\ve}
\newcommand{\BVX}{BV_X(\Omega)}
\newcommand{\p}{\partial}
\newcommand{\IO}{\int_\Omega}
\newcommand{\bG}{\boldsymbol{G}}
\newcommand{\bg}{\mathfrak g}
\newcommand{\bz}{\mathfrak z}
\newcommand{\bv}{\mathfrak v}
\newcommand{\Bux}{\mbox{Box}}
\newcommand{\e}{\ve}
\newcommand{\X}{\mathcal X}
\newcommand{\Y}{\mathcal Y}
\newcommand{\W}{\mathcal W}
\newcommand{\la}{\lambda}
\newcommand{\vf}{\varphi}
\newcommand{\rhh}{|\nabla_H \rho|}
\newcommand{\Ba}{\mathscr{B}_\gamma}
\newcommand{\Za}{Z_\beta}
\newcommand{\ra}{\rho_\beta}
\newcommand{\na}{\nabla_\beta}
\newcommand{\vt}{\vartheta}

\numberwithin{equation}{section}

\newcommand{\RN} {\mathbb{R}^N}
\newcommand{\Sob}{S^{1,p}(\Omega)}
\newcommand{\Dxk}{\frac{\partial}{\partial x_k}}
\newcommand{\Co}{C^\infty_0(\Omega)}
\newcommand{\Je}{J_\ve}
\newcommand{\beq}{\begin{equation}}
\newcommand{\bea}[1]{\begin{array}{#1} }
\newcommand{\eeq}{ \end{equation}}
\newcommand{\ea}{ \end{array}}
\newcommand{\eh}{\ve h}
\newcommand{\Dxi}{\frac{\partial}{\partial x_{i}}}
\newcommand{\Dyi}{\frac{\partial}{\partial y_{i}}}
\newcommand{\Dt}{\frac{\partial}{\partial t}}
\newcommand{\aBa}{(\alpha+1)B}
\newcommand{\GF}{\psi^{1+\frac{1}{2\alpha}}}
\newcommand{\GS}{\psi^{\frac12}}
\newcommand{\HFF}{\frac{\psi}{\rho}}
\newcommand{\HSS}{\frac{\psi}{\rho}}
\newcommand{\HFS}{\rho\psi^{\frac12-\frac{1}{2\alpha}}}
\newcommand{\HSF}{\frac{\psi^{\frac32+\frac{1}{2\alpha}}}{\rho}}
\newcommand{\AF}{\rho}
\newcommand{\AR}{\rho{\psi}^{\frac{1}{2}+\frac{1}{2\alpha}}}
\newcommand{\PF}{\alpha\frac{\psi}{|x|}}
\newcommand{\PS}{\alpha\frac{\psi}{\rho}}
\newcommand{\ds}{\displaystyle}
\newcommand{\Zt}{{\mathcal Z}^{t}}
\newcommand{\XPSI}{2\alpha\psi \begin{pmatrix} \frac{x}{|x|^2}\\ 0 \end{pmatrix} - 2\alpha\frac{{\psi}^2}{\rho^2}\begin{pmatrix} x \\ (\alpha +1)|x|^{-\alpha}y \end{pmatrix}}
\newcommand{\Z}{ \begin{pmatrix} x \\ (\alpha + 1)|x|^{-\alpha}y \end{pmatrix} }
\newcommand{\ZZ}{ \begin{pmatrix} xx^{t} & (\alpha + 1)|x|^{-\alpha}x y^{t}\\
     (\alpha + 1)|x|^{-\alpha}x^{t} y &   (\alpha + 1)^2  |x|^{-2\alpha}yy^{t}\end{pmatrix}}
\newcommand{\norm}[1]{\lVert#1 \rVert}
\newcommand{\ve}{\varepsilon}

\title[carleman estimates etc.]{  Carleman estimates for Baouendi-Grushin operators with applications to  quantitative uniqueness and strong unique continuation}
\author{Agnid Banerjee}
\address{Tata Institute of Fundamental Research\\
Centre For Applicable Mathematics \\ Bangalore-560065, India}\email[Agnid Banerjee]{agnidban@gmail.com}

\author{Nicola Garofalo}
\address{Dipartimento di Ingegneria Civile, Edile e Ambientale (DICEA) \\ Universit\`a di Padova\\ 35131 Padova, ITALY}
\email[Nicola Garofalo]{rembdrandt54@gmail.com}

\author{Ramesh Manna}
\address{Tata Institute of Fundamental Research\\
Centre For Applicable Mathematics \\ Bangalore-560065, India}
\email[Ramesh Manna]{ramesh@tifrbng.res.in}

\thanks{First author is supported by SERB Matrix grant MTR/2018/000267}
\thanks{ Second author is supported in part by a Progetto SID (Investimento Strategico di Dipartimento) ``Non-local operators in geometry and in free boundary problems, and their connection with the applied sciences", University of Padova, 2017.}
\thanks{Third author is  supported by  SERB National Postdoctoral fellowship, PDF/2017/0027}

\dedicatory{Dedicated to Sergio Vessella, on his $65$-th birthday}

%
%
%
\keywords{}
\subjclass{}

\maketitle

\tableofcontents

\begin{abstract}
In this paper we establish some new $L^{2}-L^{2}$ Carleman estimates for the Baouendi-Grushin operators $\Ba$, in \eqref{pbeta0} below. We apply such estimates to obtain: (i) an extension of the Bourgain-Kenig quantitative unique continuation; (ii) the strong unique continuation property  for some degenerate sublinear equations.

\end{abstract}

\section{Introduction}\label{S:intro}

This paper is devoted to studying some ad hoc $L^{2}-L^{2}$ Carleman estimates with two weights for the Baouendi-Grushin operators 
\begin{equation}\label{pbeta0}
\Ba u= \Delta_z u + |z|^{2\gamma} \Delta_t u,\ \ \ \ \ \ \ \gamma>0.
\end{equation}
We also present applications of such estimates to quantitative uniqueness and strong unique continuation.  
In \eqref{pbeta0} we have indicated with $z\in \R^m$, $t\in \R^k$, the respective variables for the Laplacians, and with $N=m+k$ we will henceforth write $(z,t)\in \RN$. The operator $\Ba$ is degenerate elliptic along the $k$-dimensional subspace $M = \{0\}\times \R^k\subset \R^N$, and it is not translation invariant in $\R^N$. However, it is invariant with respect to translations along $M$. We recall that a more general class of operators modelled on $\Ba$ was first introduced by Baouendi, who studied the Dirichlet problem in weighted Sobolev spaces in \cite{Ba}. Subsequently, Grushin in \cite{Gr1}, \cite{Gr2} analised the hypoellipticity of the operator $\Ba$ when $\gamma \in \mathbb{N}$, and showed that this property is drastically affected  by addition of lower order terms. Pioneering work on a class of subelliptic operators modelled on $\Ba$ was done by Franchi and Lanconelli in \cite{FL0,FL, FL3}. Remarkably, the operator $\Ba$ also played an important role in the recent work \cite{KPS} on the higher regularity of the free boundary in the classical Signorini problem. For the  connection between $\Ba$ and thin obstacle free boundary problems see also \cite{CS} and the more recent work \cite{GRO}. 

When $\gamma = 0$, then we can think of \eqref{pbeta0} as a standard Laplacian. With this observation in mind, the main results in this paper should be viewed as \emph{ad hoc} subelliptic generalisations of a basic Carleman estimate obtained by Escauriaza and Vessella in Theorem 2 of their work \cite{EV}. For the sake of accuracy we mention that, in fact, their result encompasses more general variable coefficient uniformly elliptic equations and even parabolic ones. The elliptic Carleman estimate in \cite{EV} also played a key role in the seminal work \cite{BK} of Bourgain and Kenig on the localization of the Anderson-Bernoulli model (see their Lemma 3.15). These authors obtained for the standard Laplacian a threshold quantitative unique continuation result. For quantitative propagation of smallness we refer the reader to the work \cite{MV}. 

In this paper we generalise the quantitative estimate of Bourgain and Kenig to equations of the form
 \begin{equation}\label{e0}
- \Ba u =V(z,t) u,
\end{equation} 
where $\Ba$ is as in \eqref{pbeta0}. In a third direction, we study sublinear equations of the type
\begin{equation}\label{e1}
-\Ba u =V(z,t) u + f((z,t),u),
\end{equation}
with appropriate assumptions on $f$, see the discussion below.

To provide the reader with some perspective we mention that when $\gamma = 1$ the operator $\Ba$ is intimately connected to the sub-Laplacians in groups of Heisenberg type. In such Lie groups, in fact, in the exponential coordinates with respect to a fixed orthonormal basis of the Lie algebra, the sub-Laplacian  is given by
\begin{equation}\label{slH}
\Delta_H = \Delta_z + \frac{|z|^2}{4} \Delta_t  + \sum_{\ell = 1}^k \p_{t_\ell} \sum_{i<j} b^\ell_{ij} (z_i \p_{z_j} -  z_j \p_{z_i}),
\end{equation}
where $b^\ell_{ij}$ indicate the group constants, see e.g. \cite{Gmem}. If $u$ is a solution of $\Delta_H$ that further annihilates the symplectic vector field $\sum_{\ell = 1}^k \p_{t_\ell} \sum_{i<j} b^\ell_{ij} (z_i \p_{z_j} -  z_j \p_{z_i})$, then, up to a normalisation factor of $4$, $u$ solves  the operator $\Ba$ obtained by letting $\gamma = 1$ in \eqref{pbeta0} above. We mention, in this connection, the remarkable fact  that even the weak unique continuation property fails for $-\Delta_H + V$, see \cite{Bao}. For some positive results in the Heisenberg group, and in general Carnot groups, see however \cite{GLan} and \cite{GR}.

Concerning the question of interest for this paper, the unique continuation property, we mention that for general uniformly elliptic equations there are essentially two known methods  for proving it. The former  is  based on Carleman inequalities, which are appropriate weighted versions of  Sobolev-Poincar\'e inequalities. This method  was first introduced by T. Carleman in his fundamental work \cite{C} in which  he  showed that  strong unique continuation holds  for equations of the type  $-\Delta u +V u = 0$, with $V \in L^{\infty}_{loc}(\R^2)$.
Subsequently, his estimates were generalised (and extended) to higher dimensions in \cite{Mu, He, HW, Co}, and to uniformly elliptic operators with $C^{2,\alpha}_{loc}$ principal part in the pioneering work \cite{A}. The results in \cite{A} were further extended to $C^{0,1}_{loc}$ coefficients in \cite{AKS}. 
We recall that unique continuation fails in general when the coefficients of the principal part are only H\"older continuous, see Plis' famous counterexample in \cite{Pl} for non-divergence equations, and also \cite{Mi} for equations in divergence form. 
The second approach came up in the works of Lin and the second named author, see \cite{GL1}, \cite{GL2}. Their method is based on the almost monotonicity of a generalisation of the frequency function, first introduced by Almgren in \cite{Al} for harmonic functions. Using this approach, they were able to obtain new quantitative  information  for the solutions to  divergence form elliptic equations with Lipschitz coefficients which in particular encompass and improve on those in \cite{AKS}.

The unique continuation property for the degenerate operators $\Ba$ is much subtler than the one for the Laplacian. The reason for this is that the operator $\Ba$ does not preserve ``radial" functions, see \eqref{ii} below. As a consequence, when inverting the operator $-\Ba + V$ one is confronted with Carleman estimates in which terms with both positive and negative powers of the singular ``angle function" $\psi$ in \eqref{psi} appear, and this complicates matters considerably. As the reader will see, the proofs of  our main results Theorems \ref{thm2}, \ref{main} and \ref{main4} exploit several non-trivial geometric facts that beautifully combine.

We mention that when $V\equiv 0$ in \eqref{e0} the strong unique continuation was first established by the second named author in  \cite{G}. In that work he introduced a Almgren type  frequency function associated with $\Ba$, and proved that such function is monotone non-decreasing on solutions of $\Ba= 0$. More in general, in \cite[Theorem 4.2]{G} an almost monotonicity result was proved for equations such as $\Ba u = <b(z,t),\nabla u> + V(z,t) u$, with possibly singular lower-order terms satisfying the following threshold conditions in a neighbourhood of the origin 
\begin{equation}\label{G}
|V(z,t)| \le C \frac{f(\rho)}{\rho^2} \psi, \ \ \ \ \ \ \ \ |<b(z,t),\nabla u>|\le C |Xu|\frac{f(\rho)}{\rho} \psi^{1/2},
\end{equation}
where for the notation $Xu, \rho$ and $\psi$ we respectively refer the reader to \eqref{Xgrad}, \eqref{rho} and \eqref{psi} below. In \eqref{G}, $f:(0,R_0)\to (0,\infty)$ is a non-decreasing function satisfying the Dini integrability assumption $\int_0^{R_0} \frac{f(t)}{t} dt <\infty$. This includes as a special case the example $f(t) = t^\delta$, $\delta>0$.
These results were extended to more general variable coefficient equations by Vassilev and the second named author in \cite{GV}. One should also see the related works \cite{GLan} and \cite{GR} on the Heisenberg and more general Carnot groups. We also note that a version of the monotonicity formula for $\Ba$ played an extensive role in the recent work \cite{CSS} on the obstacle problem for the fractional Laplacian. 

Using some ad hoc Carleman estimates with two weights, in \cite{GarShen} the authors were able to establish for the first time some strong unique continuation results for \eqref{e0} in the difficult situation when $V$ satisfies appropriate $L^p$ integrability hypothesis. Their analysis, which is closer in spirit to the works \cite{JK}, \cite{J}, \cite{ChS}, \cite{KT} to name a few,  only covers the special case when $\gamma=k=1$ in \eqref{pbeta0}, and ultimately rests on delicate boundedness properties of certain projector operators generalising some of the results in \cite{So}.  We also refer to the recent work \cite{BM} where, using such projector operator estimates, a new $L^{2}-L^{2}$ Carleman estimate is derived. Using the latter, the authors deduce strong unique continuation when the potential $V$  satisfies Hardy type growth assumptions. It is worth mentioning at this point that the general situation of the results in \cite{GarShen} presently remains a challenging open question, even in the simplest case when $V\in L^\infty_{loc}(\R^N)$!

After this preliminary discussion we turn to the results in the present paper. Our first contribution is the following new $L^{2}-L^{2}$ type Carleman estimate for $\Ba$ with singular weights. Such result, Theorem \ref{thm2}, should be viewed as one of the key steps in the proofs of the two subsequent main Carleman estimates in this paper, Theorems \ref{thm3} and \ref{sub1}. We have decided to present it separately in order to provide the reader with some of the main ideas in a simplified, yet significant, situation. 
The expert reader will recognise that the estimate \eqref{est1} below generalises (and in fact encompasses) the following classical one  for the standard Laplacian, see \cite{C, Mu, He, HW, Co, A, AKS, Ho}:
there exists $C=C(n)>0$ such that for every $\alpha >0$ sufficiently large, and $u \in C_0^{\infty}(\Rn \setminus \{0\})$, with $\operatorname{supp} u\subset \{|x|<R\}$, one has
\begin{eqnarray}\label{ar}
\int |x|^{-2\alpha}(u^2+|\nabla u|^2)dx \leq CR^2\int |x|^{-2\alpha}(\Delta u)^2 dx.
\end{eqnarray}
It is well known that \eqref{ar} implies the strong unique continuation property for perturbations of the Laplacian, with lower order terms satisfying  appropriate bounds. In our setting, adapting the arguments for the case of elliptic operators in \cite{AKS}, one can deduce starting from Theorem \ref{thm2} the strong unique continuation for solutions of the differential inequality
\begin{equation}\label{aks}
|\Ba u| \leq   C_1 \frac{\psi}{\rho^{2-\delta}} |u| + C_2 \frac{\psi^{1/2}}{\rho^{1-\delta}} |Xu|,
\end{equation}
for some $\delta >0$, and $C_1, C_2 \ge 0$, although, as we have mentioned in the discussion of the above hypothesis \eqref{G}, this result is already contained  in \cite[Theorems 4.2, 4.3 \& Corollary 4.3]{G}. We also recall that for the standard Laplacian it is known from the works \cite{AB, Wo, Reg, Gra, PW} that it is possible to allow $\delta = 0$ in \eqref{aks}, provided that $C_2$ is sufficiently small. We presently do not know whether this extends to the setting of the present work.

While for the relevant notation we refer the reader to Section \ref{pre}, one should note the appearance of the different weights $\psi$ and $\psi^{-1}$ in the two sides of \eqref{est1} below. In this inequality, and henceforth, we omit indicating the Lebesgue measure $dz dt$ in the relevant integrals. Also, the number $Q$ will always indicate the homogeneous dimension \eqref{Q} associated with the non-isotropic dilations \eqref{dil}.

\begin{thrm} \label{thm2}
For every $\ve>0$, there exists $C=C(m, k, \ve)>0$ such that for every $\alpha > \max\{0,(Q-4)/2\}$, $R>0$, and $u \in S^{2,2}_{0}(B_R \setminus \{0\})$ with  $\operatorname{supp} u \subset B_R \setminus \{0\}$, one has
\begin{align}\label{est1}
&\alpha^2 \int_{B_R} \rho^{-2\alpha-4+\ve}u^2 \psi + \int_{B_R}  \rho^{-2\alpha-2+\ve} |Xu|^2  \leq C R^{\ve}  \int_{B_R} \rho^{-2\alpha}(\Ba u)^2 \psi^{-1}.
\end{align}
 \end{thrm}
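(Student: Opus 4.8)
The standard route for Carleman estimates of this type is to conjugate the operator by the weight, expand into (anti)symmetric parts, and exploit a positive commutator. Concretely, set $\rho$ as in \eqref{rho} and let $\psi$ be the angle function in \eqref{psi}. The natural substitution is $u = \rho^{\alpha} \psi^{a} v$ for an appropriately chosen exponent $a$ (the ``free parameter'' that one tunes so that the cross terms in the commutator close), followed by passing to logarithmic polar-type coordinates $s = -\log \rho$, which is the coordinate in which the dilation vector field becomes $\partial_s$ and the weight $\rho^{-2\alpha}$ becomes a pure exponential $e^{2\alpha s}$. In these coordinates $\Ba$ decomposes (after multiplying by an appropriate power of $\rho$ and $\psi$) into a second-order operator of the schematic form $L = \partial_s^2 + (\text{first order in }s) + \Delta_{\text{sphere}}$, where the ``spherical'' part is the intrinsic operator on the Baouendi--Grushin sphere $\{\rho = 1\}$; the key structural input here is the commutator identity for $\Ba$ acting on functions of $\rho$ (equation \eqref{ii} referred to in the text) and the fact that the generator of the anisotropic dilations \eqref{dil} interacts with $\Ba$ in a controlled way.

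The heart of the matter is the algebraic splitting $e^{\alpha s} \Ba (e^{-\alpha s} \cdot) = S_\alpha + A_\alpha$ into a self-adjoint part $S_\alpha$ and a skew-adjoint part $A_\alpha$ (with respect to the measure on the cylinder $\R_s \times \{\rho=1\}$, possibly reweighted by a power of $\psi$), so that
\[
\int |e^{\alpha s}\Ba(e^{-\alpha s} v)|^2 = \int (S_\alpha v)^2 + \int (A_\alpha v)^2 + \int [S_\alpha, A_\alpha] v \, v,
\]
and one wants the commutator $[S_\alpha, A_\alpha]$ to be a positive operator modulo controllable errors. Because $\Ba$ does \emph{not} preserve radial functions, $S_\alpha$ and $A_\alpha$ both carry the angle function $\psi$ with mixed signs of its powers, which is exactly the difficulty flagged in the introduction; the job is to show that after the substitution $u = \rho^\alpha \psi^a v$ the ``bad'' $\psi^{-1}$ contributions are absorbed, leaving a commutator whose leading term is $\alpha^2$ times a positive weight (this produces the $\alpha^2 \int \rho^{-2\alpha - 4 + \ve} u^2 \psi$ term) plus a coercive first-order-in-$s$ piece together with the spherical Dirichlet form (this produces $\int \rho^{-2\alpha - 2 + \ve} |Xu|^2$, using that $|Xu|^2$ in these coordinates is comparably $|\partial_s v|^2 + |\nabla_{\text{sphere}} v|^2$ up to the weight $\rho^{-2}$). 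The spectral gap of the spherical operator, i.e. the bottom eigenvalue of the Baouendi--Grushin ``spherical Laplacian,'' is what forces the restriction $\alpha > \max\{0, (Q-4)/2\}$ — one needs $\alpha$ (shifted by the homogeneous dimension $Q$) to sit strictly above the relevant eigenvalue so that the quadratic form $-\partial_s^2 - \Delta_{\text{sphere}} + (\text{shift})$ is positive definite on the support condition $\operatorname{supp} u \subset B_R \setminus \{0\}$.

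The $R^{\ve}$ factor and the gain of the parameter $\ve>0$ in the exponents is handled by the usual trick of splitting the weight: write $\rho^{-2\alpha - 4 + \ve} = \rho^{-2\alpha-4}\cdot \rho^{\ve}$ and use $\rho \le R$ on the support to pull out $R^{\ve}$, at the cost of replacing $\alpha$ by $\alpha - \ve/2$ in the base estimate — this is a soft perturbation that does not affect the structure, since for $\alpha$ large the inequality $\alpha - \ve/2 > \max\{0,(Q-4)/2\}$ still holds and the constant degrades only by a factor depending on $\ve$. The remaining technical points are: (i) justifying the integration by parts and the density of $S^{2,2}_0(B_R\setminus\{0\})$ so that all boundary terms vanish (the puncture at the origin is where one uses $\alpha$ large and the weighted Sobolev space in the hypothesis); and (ii) carefully bookkeeping the lower-order error terms generated by the $\psi^a$ conjugation, showing each is of the form $o(1)$ times one of the two terms on the left or $O(1)$ times the right-hand side. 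I expect step (ii) — pinning down the correct exponent $a$ and verifying that \emph{all} the $\psi$-mixed cross terms in the commutator cancel or get absorbed — to be the main obstacle, since this is precisely the ``non-trivial geometric facts that beautifully combine'' alluded to in the introduction; everything else is, in principle, the classical Carleman machinery adapted to the sub-Riemannian polar coordinates of $\Ba$.
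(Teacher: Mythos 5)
Your plan defers precisely the step that constitutes the theorem. The route you sketch -- conjugation $u=\rho^{\alpha}\psi^{a}v$, passage to logarithmic polar coordinates, splitting into symmetric and skew parts, and positivity of the commutator via a spectral gap of the ``Baouendi--Grushin spherical Laplacian'' -- is the classical Carleman machinery, but for $\Ba$ the spherical operator appearing after this reduction is itself degenerate and its spectral theory is not elementary: making this approach rigorous is essentially what Garofalo--Shen and the later work \cite{BM} do through delicate $L^2$ projector estimates, and it is only known in the special case $\gamma=k=1$. You acknowledge that verifying the cancellation of the mixed $\psi$-power cross terms and the coercivity of the conjugated form is ``the main obstacle,'' but you give no argument for it; since this is exactly where the difficulty caused by $\Ba$ not preserving radial functions lives, the proposal as written has a genuine gap rather than a routine bookkeeping step. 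Two of your structural claims are also off: the restriction $\alpha>\max\{0,(Q-4)/2\}$ does not come from a spherical spectral gap, and the $\ve$-loss and $R^{\ve}$ factor do not come from a soft splitting $\rho^{-2\alpha-4+\ve}=\rho^{-2\alpha-4}\rho^{\ve}$ with a shift of $\alpha$ (which, incidentally, would not preserve the stated range of admissible $\alpha$).

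The paper's actual proof avoids polar coordinates and spectral decompositions altogether. One sets $u=\rho^{\beta}v$ with $\beta=\frac{2\alpha+4-Q}{2}$, so that $2\beta-2\alpha-4=-Q$, expands $\Ba u$ via the identities $\Ba f(\rho)=\psi\bigl(f''+\frac{Q-1}{\rho}f'\bigr)$ and $<Xv,X\rho>=\frac{Zv}{\rho}\psi$, and uses only the elementary inequality $(a+b)^2\ge a^2+2ab$ with $a=2\beta\rho^{\beta-2}\psi Zv$. After multiplying by $\rho^{-2\alpha}\psi^{-1}$, one cross term vanishes because $\operatorname{div}(\rho^{-Q}\psi v^2 Z)$ integrates to zero, and the other, $4\beta\int\rho^{-Q+2}Zv\,\Ba v$, is shown by a Rellich-type identity to equal $4\beta(Q-2)\int\rho^{-Q}(Zv)^2\psi\ge 0$; this is the only place where $\alpha>(Q-4)/2$ (i.e.\ $\beta>0$) is used. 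The left-hand side is then controlled by the $\psi$-weighted Hardy inequality with exponent $\ell=Q-\ve$, which is where both $\ve>0$ and the factor $R^{\ve}$ enter (the Hardy inequality fails at the critical exponent $\ell=Q$), and the gradient term is recovered by the interpolation identity $|Xu|^2=\frac12\Ba(u^2)-u\,\Ba u$ rather than by coercivity of a conjugated quadratic form. So beyond being incomplete, your plan heads toward the harder, currently unavailable route, while the theorem is proved by these comparatively elementary geometric identities.
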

 
The Carleman  estimate \eqref{est1} is new for the general Baouendi-Grushin operators \eqref{pbeta0}. Only when $\gamma=1$ and $k=1$ the estimate follows from that in Theorem 3.2 in \cite{BM} which also  holds for $\ve=0$. However, as we have mentioned above, the proof in \cite{BM} relies on delicate $L^{2}-L^{2}$ projection estimates previously established  in \cite{GarShen}, whereas \eqref{est1} will be derived from fairly elementary considerations exploiting some remarkable geometric properties of the operator first noted in \cite{G}, see also \cite{GV} and \cite{GR}. We mention that the reason for which we cannot take $\ve=0$ in \eqref{est1} is the failure for $\ell= Q$ of the Hardy inequality \eqref{h1} in Lemma \ref{hardy}.

A second question we study is that of quantitative uniqueness for stationary Schr\"odinger equations  of the type \eqref{e0}, where the potential $V$ satisfies a boundedness assumption as in \eqref{vasump} below. In order to provide some context to this aspect, we recall that in their seminal work \cite{BK} Bourgain and Kenig showed that non-trivial solutions to $\Delta u = Vu$
have vanishing order proportional to $||V||_{L^{\infty}}^{2/3} +1$. This is sharp in view of Meshov's counterexample in \cite{Me}.  In Theorem \ref{main} below we extend this result to the operator \eqref{e0}. The key ingredient is the following refined Carleman estimate, which generalises that in \cite{EV} and \cite{BK}. 

\begin{thrm} \label{thm3}
Let $0< \ve < 1$ be fixed. There exists a universal $R_0>0$, depending on $\ve$, such that  for $R \leq R_0$, $u \in S^{2,2}_{0}(B_R \setminus B_{aR})$,  and $V$ satisfying \eqref{vasump}, one has \begin{align}\label{est}
&\alpha^3\int \rho^{-2\alpha- 4+\ve} u^2 e^{2\alpha \rho^{\ve}} \psi + \alpha^2 \int \rho^{-2\alpha -4} u^2 e^{2\alpha \rho^{\ve}} \psi \leq  C \int \rho^{-2\alpha} e^{2\alpha \rho^{\ve}} (\Ba u +V u)^2 \psi^{-1},
\end{align}
for constants $C, C_1>0$ depending on $m, k, a, \ve$, and for all $\alpha > \frac{Q-4}2$, such that also $\alpha \geq  C_1 (K^{2/3} +1)$.
 \end{thrm}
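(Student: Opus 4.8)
The plan is to derive \eqref{est} from the basic Carleman estimate \eqref{est1} of Theorem~\ref{thm2} by conjugating with the exponential weight $e^{\alpha\rho^\ve}$ and absorbing the error terms into the gain coming from the extra power of $\alpha$. Concretely, given $u\in S^{2,2}_0(B_R\setminus B_{aR})$, I would set $w=e^{\alpha\rho^\ve}u$ (so that $u=e^{-\alpha\rho^\ve}w$ has the same support, which stays away from the origin), and compute
\begin{align*}
\Ba u = e^{-\alpha\rho^\ve}\Bigl(\Ba w - 2\alpha\langle X(\rho^\ve),Xw\rangle - \alpha\,\Ba(\rho^\ve)\,w + \alpha^2|X(\rho^\ve)|^2 w\Bigr).
\end{align*}
Here one uses the intrinsic gradient $Xu$ and the identities from Section~\ref{pre}: since $\rho$ is the natural gauge, $|X\rho|^2$ is comparable to $\psi$ and $\Ba\rho$ is of order $\rho^{-1}\psi$, so $X(\rho^\ve)=\ve\rho^{\ve-1}X\rho$ gives $|X(\rho^\ve)|^2\sim \ve^2\rho^{2\ve-2}\psi$ and $\Ba(\rho^\ve)\sim \rho^{\ve-2}\psi$ up to harmless constants. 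The point of the $\rho^\ve$ exponent (rather than a first power) is that these weights then interact correctly with the $\rho^{-2\alpha-4+\ve}$ and $\rho^{-2\alpha-2+\ve}$ weights on the left of \eqref{est1}.

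Next I would apply Theorem~\ref{thm2} to $w$ — this requires only that $w$ be supported in $B_R\setminus\{0\}$ and $\alpha>\max\{0,(Q-4)/2\}$ — to get
\begin{align*}
\alpha^2\int \rho^{-2\alpha-4+\ve}w^2\psi + \int \rho^{-2\alpha-2+\ve}|Xw|^2 \leq C R^\ve\int \rho^{-2\alpha}(\Ba w)^2\psi^{-1}.
\end{align*}
Then substitute $\Ba w = e^{\alpha\rho^\ve}\Ba u + 2\alpha\langle X(\rho^\ve),Xw\rangle + \alpha\Ba(\rho^\ve)w - \alpha^2|X(\rho^\ve)|^2 w$ into the right-hand side and expand the square, bounding $(\Ba w)^2\lesssim e^{2\alpha\rho^\ve}(\Ba u)^2 + \alpha^2|X(\rho^\ve)|^2|Xw|^2 + \alpha^2\Ba(\rho^\ve)^2 w^2 + \alpha^4|X(\rho^\ve)|^4 w^2$. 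Multiplying by $\psi^{-1}$ and using the size estimates above, the three error terms are controlled, respectively, by $C\ve^2\alpha^2 R^{\ve}\int\rho^{-2\alpha-2+\ve}|Xw|^2$ (note $R^\ve\rho^{2\ve-2}\cdot\rho^{-2\alpha}=R^\ve\rho^{-2\alpha-2+\ve}\cdot\rho^\ve\le R^{2\ve}\rho^{-2\alpha-2+\ve}$ on the support), by $C\alpha^2 R^{2\ve}\int\rho^{-2\alpha-4+\ve}w^2\psi$, and by $C\ve^4\alpha^4 R^{3\ve}\int \rho^{-2\alpha-4+3\ve}w^2\psi$. Choosing $R\le R_0$ small enough that $C R^{\ve}$-type factors are $\le 1/4$ (this is why $R_0$ depends on $\ve$), the first two error terms are absorbed into the left side of the inequality for $w$; the quartic term looks dangerous because of the $\alpha^4$, but after absorbing $R^\ve$ it carries $\rho^{-2\alpha-4+3\ve}=\rho^{-2\alpha-4+\ve}\cdot\rho^{2\ve}\le R^{2\ve}\rho^{-2\alpha-4+\ve}$, so with $R_0$ small it is dominated by $\alpha^2 R^{2\ve}$ against the weighted $L^2$ term — however one genuinely loses here and must feed back: the correct bookkeeping is that the $\alpha^2$-term on the left absorbs the $\alpha^2 R^\ve$ error while a \emph{spare} factor of $\alpha$ (from $\alpha>C_1 K^{2/3}$, used below) upgrades it.

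Finally, to handle the potential, write $\Ba u = (\Ba u + Vu) - Vu$ and use $|V|\le K\rho^{-2}\psi$ (the hypothesis \eqref{vasump}, with norm $K$) so that $\int\rho^{-2\alpha}e^{2\alpha\rho^\ve}(Vu)^2\psi^{-1}\le K^2\int\rho^{-2\alpha-4}e^{2\alpha\rho^\ve}u^2\psi$. This is precisely the second term on the left of \eqref{est}, and it is absorbed provided $\alpha^2\gtrsim CK^2$, i.e.\ $\alpha\gtrsim K$; but the sharp Bourgain--Kenig threshold is $\alpha\gtrsim K^{2/3}$, which is why the estimate is arranged with $\alpha^3$ in front of the leading term: after conjugation the exponential weight actually produces a term of size $\alpha^3\int\rho^{-2\alpha-4+\ve}u^2e^{2\alpha\rho^\ve}\psi$ on the left (the cross term $2\alpha\langle X(\rho^\ve),Xw\rangle$ against itself, integrated by parts against the main weight, yields an extra power of $\alpha$ beyond what Theorem~\ref{thm2} gives), and balancing $\alpha^3$ against $K^2$ via Young's inequality on the piece $\alpha^2\rho^{-2\alpha-4}$ forces exactly $\alpha\ge C_1(K^{2/3}+1)$.

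The main obstacle, and the step I would spend the most care on, is the second one: tracking the precise powers of $\alpha$ through the conjugation and integration by parts, so that the cross term $-4\alpha\langle X(\rho^\ve),Xw\rangle\cdot(\text{main weight})$ produces a \emph{positive} contribution of order $\alpha^3$ rather than an error. This requires integrating by parts in the Baouendi--Grushin vector fields against the weight $\rho^{-2\alpha-4+\ve}\psi$, using that $\langle X\rho,X\psi\rangle$ and the second-order quantities $\Ba\rho$, $\Ba\psi$ have the exact homogeneity stated in Section~\ref{pre}; the ``beautifully combining geometric facts'' the introduction alludes to are exactly the sign and size of these commutator terms. Everything else — the substitution, expanding squares, the $V$-absorption, and the final Young's inequality fixing the $K^{2/3}$ threshold — is routine once that identity is in hand.
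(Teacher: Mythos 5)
There is a genuine gap at the heart of your plan: you use Theorem \ref{thm2} as a black box on $w=e^{\alpha\rho^{\ve}}u$ and then move all conjugation terms to the right-hand side by expanding $(\Ba w)^2$ via Cauchy--Schwarz. This can never produce the $\alpha^3$ term in \eqref{est}, since Theorem \ref{thm2} only yields $\alpha^2$ on the left and discarding the signed cross terms only enlarges the right-hand side; your remark that an extra power of $\alpha$ ``appears after conjugation'' is asserted but never derived within the scheme you set up. Quantitatively the absorption also fails: the gradient error is of size $\ve^2\alpha^2\int\rho^{-2\alpha-2+2\ve}|Xw|^2$, which against the corresponding left-hand term $\int\rho^{-2\alpha-2+\ve}|Xw|^2$ (whose coefficient is $1$, not $\alpha^2$) carries a factor comparable to $\alpha^2R^{2\ve}$, and the quartic term behaves like $\alpha^2R^{3\ve}$ against $\alpha^2\int\rho^{-2\alpha-4+\ve}w^2\psi$; these cannot be made small by a choice of $R_0$ depending only on $\ve$, because in the application $\alpha\ge C_1(K^{2/3}+1)$ is arbitrarily large, and the ``feed back a spare factor of $\alpha$'' fix does not work since the $\alpha^2$-term on the left controls $w^2$, not $|Xw|^2$. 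You also never obtain the second left-hand term $\alpha^2\int\rho^{-2\alpha-4}u^2e^{2\alpha\rho^{\ve}}\psi$ of \eqref{est}: its weight corresponds to the critical Hardy exponent $\ell=Q$, where \eqref{h1} fails, and this is exactly where the hypothesis $\operatorname{supp} u\subset B_R\setminus B_{aR}$ and Lemma \ref{hardy2} must enter; your argument never uses the annulus or the constant $a$ at all. (Minor: \eqref{vasump} reads $|V|\le K\psi$, not $K\rho^{-2}\psi$.)

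The paper's proof does not conjugate and then quote Theorem \ref{thm2}; it reruns the entire Carleman computation for the combined substitution $v=\rho^{-\beta}e^{\alpha\rho^{\ve}}u$, $\beta=\frac{2\alpha+4-Q}{2}$, using $(a+b)^2\ge a^2+2ab$ with $a=2\beta\rho^{\beta-2}Zv\,\psi$ and \emph{keeping the sign} of the cross terms. The cross product of $a$ with the term $-\alpha\ve(2\beta+\ve+Q-2)\rho^{\beta+\ve-2}v\psi$ is, after integrating $Z(v^2)$ against $\rho^{-Q+\ve}\psi$ using Proposition \ref{Est} and \eqref{Zpsi}, the positive quantity $2\alpha\beta\ve^2(2\beta+\ve+Q-2)\int\rho^{-Q+\ve}v^2\psi\sim\alpha^3\int\rho^{-Q+\ve}v^2\psi$, which is the source of the $\alpha^3$ term; the unfavorable terms carry only one extra power of $\alpha$ times $\ve R^{\ve}$ relative to the gain $\beta^2\int\rho^{-Q}(Zv)^2\psi$, so they are absorbed with $R_0=R_0(\ve)$ independent of $\alpha$; Lemma \ref{hardy2}, which needs the annulus support, converts $\beta^2\int\rho^{-Q}(Zv)^2\psi$ into the $\alpha^2\int\rho^{-2\alpha-4}u^2e^{2\alpha\rho^{\ve}}\psi$ term; and only then is $V$ absorbed via $\alpha^3\gtrsim K^2$, which is the one step of your outline that matches the paper and is where $\alpha\ge C_1(K^{2/3}+1)$ arises. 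To repair your approach you would have to rerun the proof of Theorem \ref{thm2} for the conjugated function rather than invoke it as a black box, precisely so that the favorable cross terms survive.
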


Using the estimate in Theorem \ref{thm3} above,  we obtain   the following  subelliptic quantitative uniqueness result.

\begin{thrm}\label{main}
Let $u \in S^{2,2}(B_R)$ with $|u| \leq K_0$ for some $K_0>0$ be a solution to \eqref{e0} in $B_{R}$, with $V$ satisfying the bound
\begin{equation}\label{vasump}
|V(z,t)| \leq K \psi.
\end{equation}
 Then, there exists  $R_0 \in (0, R/2]$  depending on $m, k, \gamma$, and  constants $C_1,C_2>0$ depending on $m, k, \gamma,   K_0$ and $\int_{B_{\frac{R_0}{4}}}  u^2 \psi$, such that for all $0<r<  \frac{R_0}{9}$ one has
\begin{equation}\label{main1}
||u||_{L^{\infty}(B_r)} \geq C_1 \left(\frac{r}{R_0}\right)^{C_2 (K^{2/3}+1)}.   
\end{equation}
\end{thrm}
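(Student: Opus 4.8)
The plan is to deduce Theorem \ref{main} from the Carleman estimate of Theorem \ref{thm3} via the standard three-ball / propagation-of-smallness machinery, adapted to the Baouendi--Grushin geometry. Since \eqref{main1} is a statement about the vanishing order of $u$ at the origin, the goal is to show that if $u$ vanishes to order $m$ at $0$ (in an $L^2$-averaged sense over gauge balls $B_r$), then necessarily $m \le C_2(K^{2/3}+1)$, up to the additive/multiplicative constants recorded in the statement.

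First I would set up the quantitative notation. For a solution $u$ of \eqref{e0} in $B_R$, define the weighted $L^2$ growth function
\[
H(r) = \int_{B_r} u^2\, \psi,
\]
and the doubling-type quantity comparing $H$ at different scales. The strategy is: (i) fix $R_0$ as in Theorem \ref{thm3}; (ii) apply \eqref{est} to $w = \eta u$ where $\eta$ is a cutoff supported in an annulus $B_{R_0}\setminus B_{aR_0}$, chosen so that the commutator terms $[\Ba,\eta]u$ are supported in two thin shells near $\rho = aR_0$ and $\rho = R_0$; (iii) absorb the potential term: since $|Vu|\le K\psi|u|$, the contribution $\int \rho^{-2\alpha} e^{2\alpha\rho^\ve}(Vu)^2\psi^{-1} \le K^2 \int \rho^{-2\alpha}e^{2\alpha\rho^\ve}u^2\psi$ is dominated by the left-hand side of \eqref{est} precisely because of the factor $\alpha^2$ in the second term there, provided $\alpha \ge C_1(K^{2/3}+1)$ — this is exactly where the $K^{2/3}$ threshold enters, and it is the reason Theorem \ref{thm3} was engineered with the extra $\alpha^2\int\rho^{-2\alpha-4}u^2 e^{2\alpha\rho^\ve}\psi$ term. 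After absorption one obtains a Carleman inequality of the pure form
\[
\alpha^2 \int \rho^{-2\alpha-4}(\eta u)^2 e^{2\alpha\rho^\ve}\psi \le C\int \rho^{-2\alpha}e^{2\alpha\rho^\ve}\big([\Ba,\eta]u\big)^2\psi^{-1}.
\]

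Next I would extract the three-ball inequality. Localize $\eta$ so that $\eta\equiv 1$ on $B_{R_0/2}\setminus B_{2aR_0}$; bound the right-hand side by the integral over the two shells, where on the inner shell $\rho\sim aR_0$ and on the outer shell $\rho\sim R_0$, and use elliptic Caccioppoli-type estimates for $\Ba$ (valid since $V$ is bounded relative to $\psi$) to replace $|Xu|^2$ terms by $L^2$ norms of $u$ on slightly larger shells. Comparing the powers $\rho^{-2\alpha}$ evaluated at the three relevant radii $r_1 \ll r_2 \ll r_3 \sim R_0$ and optimizing over $\alpha$ in the admissible range $\alpha \ge C_1(K^{2/3}+1)$ yields a three-ball inequality of log-convex type,
\[
\int_{B_{r_2}} u^2\psi \;\le\; C\Big(\int_{B_{r_1}} u^2\psi\Big)^{\theta}\Big(\int_{B_{r_3}} u^2\psi\Big)^{1-\theta} e^{C(K^{2/3}+1)},
\]
with $\theta\in(0,1)$ depending only on $m,k,\gamma$ and the ratio of the radii. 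Iterating this inequality from the fixed scale $R_0/4$ down to scale $r$ — using $|u|\le K_0$ to control the $B_{r_3}$ factor and the a priori positive quantity $\int_{B_{R_0/4}}u^2\psi$ as the normalizing denominator — produces the lower bound $\int_{B_r}u^2\psi \ge C_1 (r/R_0)^{C_2(K^{2/3}+1)}$, and then a reverse-H\"older / sup bound for $\Ba$-subsolutions converts this into the stated $L^\infty$ estimate \eqref{main1}.

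The main obstacle I anticipate is not the soft iteration but the interplay between the cutoff and the degenerate geometry: the function $\eta$ must be chosen adapted to the gauge $\rho$ (not the Euclidean distance), and one must verify that $(\Ba(\eta u) + V\eta u)$, equivalently the commutator $[\Ba,\eta]u = u\,\Ba\eta + 2\langle X\eta, Xu\rangle$, is genuinely supported in the two shells and that the weight $\psi^{-1}$ appearing on the right of \eqref{est} does not blow up there — which is fine since $\psi$ is bounded above and below away from the characteristic manifold $M$, but one must check the shells can be taken to avoid a neighborhood of $M\cap\{$supp$\,\eta\}$ or handle the crossing carefully. A related technical point is the Caccioppoli inequality for $\Ba$ with the potential $V$: one needs $\int_{\text{shell}}|Xu|^2\psi$-type bounds by $L^2$ norms of $u$, which follows from testing \eqref{e0} against $\zeta^2 u$ with $\zeta$ a cutoff, using $|V|\le K\psi$ and the structure $|Xu|^2 = |\nabla_z u|^2 + |z|^{2\gamma}|\nabla_t u|^2$; the bookkeeping of the $\psi$ weights in this step is where most of the routine-but-delicate work lies, and I would carry it out before assembling the three-ball inequality.
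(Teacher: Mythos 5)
Your proposal follows essentially the same route as the paper: apply the Carleman estimate \eqref{est} of Theorem \ref{thm3} to $\phi u$ with a gauge-annular cutoff, use the equation so that only the commutator terms $u\,\Ba\phi + 2\langle X\phi, Xu\rangle$ survive on the two shells, control the gradient terms by the Caccioppoli inequality of Lemma \ref{energy}, compare the weights at the different radii, choose $\alpha\gtrsim K^{2/3}+1$ and large enough to absorb the outer shell, and iterate (the paper does this in Bakri's two-step form \cite{Bk1}, \cite{BG1} rather than through an explicit three-ball inequality, but the content is the same). The one obstacle you flag is resolved not by avoiding the characteristic manifold $M$ (impossible, since every gauge annulus meets $\{z=0\}$) but exactly as you hint: taking $\phi=f(\rho)$ gives $|X\phi|\le C\psi^{1/2}/R$ and $|\Ba\phi|\le C\psi/R^2$ as in \eqref{bd}, and these $\psi$ factors cancel the $\psi^{-1}$ weight on the right-hand side of \eqref{est}; also note that the final passage to \eqref{main1} needs no sup bound for subsolutions, since $\psi\le 1$ makes the bound $\|u\|_{L^\infty(B_r)}^2\,C r^Q\ge \int_{B_r}u^2\psi$ trivial.
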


It is worth emphasizing that, when $k=0$, we have $N = m$ and then from \eqref{psi} we have $\psi \equiv 1$. In this case the constant $K$ in \eqref{vasump} can be taken to be  $||V||_{L^{\infty}}$, and therefore Theorem \ref{main} reduces to the cited Euclidean result in \cite{BK}. We also note that when $V$ satisfies the additional hypothesis
\[
|ZV| \leq K\psi,
\]
then, using a variant of the frequency function approach, the following sharper estimate was established in \cite{BG1} for solutions to \eqref{e0},
\begin{equation}\label{m}
||u||_{L^{\infty}(B_r)} \geq C_1 \left(\frac{r}{R_0}\right)^{C_2 (\sqrt{K}+1)}.   
\end{equation}

The reader should note that  for Laplacian on a compact manifold the counterpart of \eqref{m} was first obtained using Carleman estimates by Bakri in \cite{Bk}. This generalised the sharp vanishing order estimate of Donnelly and Fefferman in \cite{DF1} for eigenfunctions of the Laplacian. We also mention that, for the standard Laplacian, the result of Bakri was subsequently obtained by Zhu \cite{Zhu1}, using a variant of the frequency function approach in \cite{GL1, GL2}. This was extended in \cite{BG} to more general elliptic equations with Lipschitz principal part. The authors also established a certain boundary version of the vanishing order estimate. Finally, we refer to the  paper \cite{Ru1} for an interesting generalisation to nonlocal equations of the quantitative uniqueness result in \cite{Bk}, and also to \cite{B} for a generalisation to Carnot groups of arbitrary step.

As a final application of the Carleman estimates, in this paper we present a generalisation to the subelliptic operator $\Ba$ of the strong unique continuation results in \cite{SW} and \cite{Ru} for sublinear equations. Precisely, we consider equations of the type
 \begin{equation}\label{sub}
 -\Ba u=  f((z,t), u) \psi + Vu, 
 \end{equation}
 where $V$ satisfies  the growth  condition \eqref{vasump}, and the nonlinearity $f$ and its primitive, $F((z,t),  s) = \int_{0}^{s} f((z,t) s) ds$, satisfy the following structural assumptions:   
 \begin{equation}\label{a1}
 \begin{cases}
 f((z,t), 0) =0,
 \\
  0 < s f((z,t), s) \leq q F((z,t), s), \ \text{for some $q \in (1, 2)$ and  $s \in (-1, 1) \setminus \{0\}$},
  \\
  |\nabla_{(z,t)} f| \leq K |f|,\ |\nabla_{(z,t)} F| \leq K F,
  \\
  f((z,t), s) \leq \kappa s^{p-1}\ \text{for some $p \in (1,2)$}.
\end{cases}
\end{equation}
 We note that the first  and the last condition  in \eqref{a1} imply that  for constants $c_0, c_1$, we have 
\begin{equation}\label{a0}
 c_1 s^{p} \geq  F(\cdot,s) \geq c_0 s^{q},\ \text{for}\ s \in (-1,1).
  \end{equation}
  A  prototypical $f$ satisfying \eqref{a1} is $f((z,t), u) = \sum_{i=1}^\ell c_i(z,t) |u|^{q_i-2} u$, where for some constants $k_0, k_1$ and $K$, one has for each $i = 1,...,\ell$: $q_i \in (1,2)$, $0<k_0<c_i< k_1$, and $|\nabla c_i| < K$.  In this case, we can take  $q= \max \{q_i\}$ and $p= \min \{q_i \}$. With $f$ satisfying the hypothesis \eqref{a1}, the unique continuation property for uniformly elliptic sublinear equations of the type 
\[
-\operatorname{div}(A(x) \nabla u) = f(x, u)
\]
was recently studied in the interesting papers \cite{SW} and \cite{Ru}. More precisely, the weak unique continuation for such  sublinear equations 
was first obtained in \cite{SW} using the frequency function approach. Subsequently, R\"uland  in \cite{Ru}  established the strong unique continuation property for such equations using Carleman estimates. We note that the sign assumption on $f, F$ in \eqref{a1} is not restrictive because the strong unique continuation property fails when $f= -|u|^{q-2}u$ and $A=\mathbb{I}$ (for a counterexample in the one-dimensional situation, see e.g. \cite{SW}).
The next theorem extends R\"uland's result to degenerate elliptic equations of the type \eqref{sub}. We also note that, when specialised to the case of the standard Laplacian, our proof slightly   simplifies that in \cite{Ru}.

\begin{thrm}\label{main4}
Assume that $V$ be as in \eqref{vasump}, and that $f$ satisfy \eqref{a1}. Let $u \in S^{2,2}(B_1)$ be a solution to \eqref{sub} in $B_1$ such that $||u||_{L^{\infty}(B_1)} \leq 1$. If $u$ vanishes to infinite order at $(0,0)$ in the sense of Definition \ref{v0}, then $u \equiv 0$. 
\end{thrm}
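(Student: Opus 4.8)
The plan is to follow R\"uland's Carleman-based strategy from \cite{Ru}, adapted to the Baouendi-Grushin geometry using the estimate in Theorem \ref{thm3} (or its $S^{2,2}$ variant). First I would argue by contradiction: assume $u \not\equiv 0$ and that $u$ vanishes to infinite order at the origin. The goal is to derive a growth/decay dichotomy that is incompatible with infinite-order vanishing. The starting point is the equation \eqref{sub}, which we rewrite as $-\Ba u = \tilde V u$ with $\tilde V = V + f((z,t),u)\psi/u$. The whole subtlety is that $f((z,t),u)/u$ is \emph{not} bounded like $\psi$ near the zero set of $u$ --- indeed by \eqref{a0} and the sublinear growth $f(\cdot,s)\le \kappa s^{p-1}$ we only get $|f((z,t),u)/u| \le \kappa |u|^{p-2}$, which blows up where $u\to 0$. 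So Theorem \ref{thm3} cannot be applied directly with $V$ replaced by $\tilde V$.

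The key idea, exactly as in \cite{Ru, SW}, is to exploit the sign condition $0 < sf((z,t),s)\le q F((z,t),s)$ together with the variational/Pohozaev-type structure, so that one never has to treat $f((z,t),u)\psi$ as a linear potential on the bad set. Concretely, the steps I would carry out are: (i) derive a quantitative three-sphere / doubling inequality for $u$ by plugging a suitable cutoff of $u$ into the Carleman inequality \eqref{est} of Theorem \ref{thm3}, keeping the term $\int \rho^{-2\alpha} e^{2\alpha\rho^\ve} (f((z,t),u)\psi)^2 \psi^{-1} = \int \rho^{-2\alpha} e^{2\alpha\rho^\ve} f((z,t),u)^2 \psi$ on the right-hand side rather than absorbing it; (ii) control this term using the pointwise bound $f((z,t),u)^2 \le \kappa^2 u^{2(p-1)} = \kappa^2 u^2 \cdot u^{2(p-2)}$ on the set where $|u|$ is small, and using $\|u\|_{L^\infty(B_1)}\le 1$ together with the infinite-vanishing hypothesis to make $\int_{B_r} u^2 \psi$ (hence the whole contribution near $0$) superpolynomially small in $r$; (iii) on the complementary region, where $|u|$ is bounded below, $f((z,t),u)/u$ \emph{is} bounded and one can treat it as an honest potential $\tilde V$ satisfying \eqref{vasump} with a fixed $K$ (using also $|\nabla_{(z,t)} f|\le K|f|$ and $|\nabla_{(z,t)} F|\le KF$ to get the needed regularity of $\tilde V$), so Theorem \ref{thm3} applies there with a fixed large $\alpha$; (iv) combine these to obtain a doubling-type estimate $\int_{B_{2r}} u^2 \psi \le C \int_{B_r} u^2 \psi$ with $C$ independent of $r$, or equivalently a finite bound on the vanishing order of $u$ at $0$; (v) conclude that infinite-order vanishing forces $u\equiv 0$ near $0$, and then propagate this to all of $B_1$ by the standard connectedness / weak unique continuation argument (covering chains of balls, again via Theorem \ref{thm3} applied with $V$ honestly bounded away from the common zero set — here using that the interior zero set has empty interior, or simply that $u\equiv 0$ in an open set combined with \eqref{est} forces $u\equiv 0$).

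The main obstacle I expect is step (ii)–(iii): the precise bookkeeping of where $|u|$ is small versus bounded below, and making the splitting compatible with the weighted Carleman norms. One must choose the threshold for ``small $u$'' as a function of $r$ (say $|u|\le r^{\sigma}$ for a suitable $\sigma$) and verify, using \eqref{a1}–\eqref{a0} and the $S^{2,2}$ regularity of $u$ (via Caccioppoli/Sobolev estimates for $\Ba$), that the contribution of the small-$u$ set to the right-hand side of \eqref{est} is dominated by the left-hand side for $\alpha$ in the admissible range $\alpha \gtrsim K^{2/3}+1$. A secondary technical point is that the cutoff used to make $u$ admissible in Theorem \ref{thm3} (supported away from $0$ and from $\p B_R$) generates commutator terms $\Ba(\eta u) = \eta \Ba u + 2\langle X\eta, Xu\rangle + u\Ba\eta$; these are supported on annuli where things are under control, but one has to feed them back carefully, exactly as in the passage from \eqref{est} to the three-sphere inequality that underlies Theorem \ref{main}. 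Apart from these, the geometric facts about $\psi$, $\rho$ and the homogeneous dimension $Q$ that already power Theorems \ref{thm2} and \ref{thm3} are what make everything fit together, and the argument should, as the authors note, even streamline R\"uland's original proof in the Euclidean case $\gamma = 0$.
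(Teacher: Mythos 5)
Your plan has a genuine gap at its core, namely in steps (ii)--(iii). You correctly observe that $f((z,t),u)\psi$ cannot be treated as a bounded potential near the zero set of $u$, but the perturbative splitting you propose does not repair this. On the set where $|u|$ is small, the only available pointwise bound is $f((z,t),u)^2\le \kappa^2|u|^{2(p-1)}$ with $p<2$, and since $2(p-1)<2$ one has $|u|^{2(p-1)}\gg u^2$ precisely where $u$ is small: the ratio $|u|^{2(p-2)}$ blows up as $u\to 0$. Consequently the right-hand side contribution $\int\rho^{-2\alpha}e^{2\alpha\rho^{\ve}}f((z,t),u)^2\psi$ on the small-$u$ set can never be absorbed into $\alpha^2\int\rho^{-2\alpha-4}u^2e^{2\alpha\rho^{\ve}}\psi$, no matter how you choose the threshold $|u|\le r^{\sigma}$; and the infinite-order vanishing hypothesis works \emph{against} you here, since the faster $u$ decays the larger $|u|^{2(p-1)}/u^2$ becomes on annuli near the origin (the Carleman weights are only polynomial/mildly exponential in $\rho^{-1}$ and cannot compensate). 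This is exactly the structural feature of sublinear problems, and it is why neither R\"uland nor the paper treats the nonlinearity perturbatively. The paper's proof instead establishes a \emph{new} Carleman estimate (Theorem \ref{sub1}) in which the sublinear term enters the left-hand side with a favorable sign: in the expansion with $u=\rho^{\beta}v$ the cross term $4\beta\int\rho^{-2\alpha+\beta-2}f((z,t),\rho^{\beta}v)\,Zv\,\psi^2$ is integrated by parts using $Z$, $\operatorname{div}(\rho^{-2\alpha-2}Z)=-2(\beta-1)\rho^{-2\alpha-2}$ and \eqref{Zpsi}, and the structure conditions $0<sf\le qF$ with $q<2$, $|\nabla_{(z,t)}F|\le KF$ and \eqref{a0} then yield the positive term $\alpha^2\int\rho^{-2\alpha-2}|u|^q\psi^2$ in \eqref{f10}. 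This is the missing idea in your proposal.

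Two further points. First, Theorem \ref{thm3} is the wrong tool for this purpose: its estimate \eqref{est} is proved for $u\in S^{2,2}_0(B_R\setminus B_{aR})$ and its constants depend on $a$ through Lemma \ref{hardy2}, so it cannot be fed cutoffs shrinking to the origin; the sublinear Carleman estimate of the paper is modeled on Theorem \ref{thm2} (with the Hardy inequality \eqref{h1} at level $\ell=Q-\ve$, valid on punctured balls). Second, the conclusion is not reached through a doubling inequality or a finite-vanishing-order dichotomy: in the paper one applies \eqref{f10} to $\phi_\ve u$, shows via a Caccioppoli estimate that $Xu$ also vanishes to infinite order so the inner cutoff terms disappear as $\ve\to 0$ for each fixed $\alpha$, and then lets $\alpha\to\infty$ while the surviving right-hand side is supported in the fixed annulus $\{1/2<\rho<1\}$; comparing the weights $4^{2\alpha}$ versus $2^{2\alpha}$ forces $u\equiv 0$ in $B_{1/4}$, after which translation invariance in $t$ and uniform ellipticity away from $\{z=0\}$ (together with the Euclidean results of \cite{Ru}, \cite{SW}) give $u\equiv 0$ in $B_1$. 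Your closing steps (iv)--(v) would need to be replaced by this scheme even if the absorption issue were fixed.
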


In closing, we briefly describe the organisation of the paper. In Section 2, we introduce  the relevant notions and gather some results from \cite{G} which will be needed in the rest of the paper. In Section 3 we prove Theorem \ref{thm2}. Section \ref{S:qu} is devoted to the proof of Theorems \ref{thm3} and \ref{main}. The final Section \ref{S:sub} contains the proof of Theorem \ref{main4}. 


\section{Notations and preliminary results}\label{pre}

Henceforth in this paper we follow the notations adopted in \cite{G} and \cite{GV}, with one notable proviso: the parameter $\gamma>0$ in \eqref{df}, etc. in this paper plays the role of $\alpha >0$ in \cite{G} and \cite{GV}. The reason for this is that we have reserved the greek letter $\alpha$ for the powers of the singular  weights in our Carleman estimates.  Throughout the paper, whenever convenient, we will use the summation convention over repeated indices. Let $N = m + k$, and denote an arbitrary point in $\RN$ as  $(z,t) \in \R^m \times \R^k$. If we consider the vector fields $X_1,...,X_N$ in $\R^N$ defined by 
\begin{align}\label{df}
& X_i= \partial_{z_i},\ \ \  i=1, ...m,\ \ \ \  \ \ \ \ X_{m+j}= |z|^{\gamma} \partial_{t_j},\ \ \   j=1, ...k,
\end{align}
then it is immediate to recognise that
\begin{equation}\label{pbeta}
\Ba u= \sum_{i=1}^N X_i^2 u.
\end{equation}
Given a function $u$, we respectively denote by
\begin{equation}\label{Xgrad}
Xu= (X_1 u,...,X_N u),\ \ \ \ \ \ \ \ \ |X u|^2= <X u,X u> = \sum_{i=1}^N (X_i u)^2,
\end{equation}
the intrinsic (degenerate) gradient of a function $u$, and the square of its length. 
We  now define the relevant function space for our work. The reader should bear in mind that, henceforth, we routinely omit indicating in all integrals the Lebesgue measure $dz dt$ in $\R^N$. 

\begin{dfn}
Given an open set $\Om\subset \R^N$,  we denote by  $S^{2,2}(\Om)$ the completion of $C^{\infty}(\overline{\Om})$ with respect to the norm
\[
||f||_{S^{2,2}(\Om)}= \int_{\Om} f^2 + |Xf|^2 + \sum_{i,j=1}^N |X_i X_j f|^2.  
\]
We instead indicate with $S^{2,2}_0(\Om)$ the completion of $C^{\infty}_{0}(\Om)$ with respect to the same norm.
\end{dfn}
We note that the vector fields $X_i$ are homogeneous of degree one with respect to the following family of anisotropic dilations 
\begin{equation}\label{dil}
\delta_\la(z,t)=(\la z,\la^{\gamma+1} t),\ \ \ \ \ \ \ \ \la>0.
\end{equation}
Consequently, $\Ba$ is homogeneous of degree two with respect to \eqref{dil}. 
The infinitesimal generator of the family of dilations \eqref{dil}  is given by the vector field
\begin{equation}\label{Z}
Z= \sum_{i=1}^m z_i \partial_{z_i} + (\gamma+1)\sum_{j=1}^k t_j \partial_{t_j}.
\end{equation} 
A  function $v$ is $\delta_{\la}$-homogeneous of degree $\kappa$ if and only if $Zv=\kappa v$.  We note that Lebesgue measure scales with respect to the anisotropic dilations \eqref{dil} according to the formula $d(\delta_\la(z,t)) = \la^Q dz dt$, where
\begin{equation}\label{Q}
Q= m + (\gamma+1) k.
\end{equation}
Consequently, the number $Q$ plays the role of a dimension in the analysis of the operator
$\Ba$ (since $m, k\ge 1$ and $\gamma>0$, we note that $Q>2$).
For instance, one has the remarkable fact, discovered in \cite{G}, that the fundamental solution $\Gamma$ of $\Ba$  with pole at the origin is given by the formula
\[
\Gamma(z,t) = \frac{C}{\rho(z,t)^{Q-2}},\ \ \ \ \ \ \ \ \ (z,t)\not= (0,0),
\]
where $C>0$ is suitably chosen, and $\rho$ is the pseudo-gauge 
\begin{equation}\label{rho}
\rho(z,t)=(|z|^{2(\gamma+1)} + (\gamma+1)^2 |t|^2)^{\frac{1}{2(\gamma+1)}}.
\end{equation}
We respectively denote by 
\[
B_r = \{(z,t)\in \R^N\mid \rho(z,t) < r\},\ \ \ \ \ \ \ \ S_r = \{(z,t)\in \R^N\mid \rho(z,t) = r\}, 
\]
the gauge pseudo-ball and sphere centred at $0$ with radius $r$. Since $\rho$ in \eqref{rho} is homogeneous of degree one, we have
\begin{equation}\label{hg}
Z\rho=\rho.
\end{equation}
 We also need the angle function $\psi$ introduced in \cite{G}
\begin{equation}\label{psi}
\psi = |X\rho|^2= \frac{|z|^{2\gamma}}{\rho^{2\gamma}}.
\end{equation}
The function $\psi$ vanishes on the characteristic manifold $M=\Rn \times \{0\}$, and clearly satisfies $0\leq \psi \leq 1$. Since $\psi$ is homogeneous of degree zero with respect to \eqref{dil}, one has
\begin{equation}\label{Zpsi}
 Z\psi = 0.
 \end{equation}
 If $f\in C^2(\R)$ and $v\in C^2(\R^N)$, then we have the important identities (see \cite{G}):
 \begin{equation}\label{ii}
 \Ba f(\rho) = \psi \left(f''(\rho) + \frac{Q-1}{\rho} f'(\rho)\right),
 \end{equation}
 and 
 \begin{equation}\label{h10}
<Xv,X\rho> = \sum_{i=1}^N X_i v X_i \rho = \frac{Zv}{\rho} \psi.
\end{equation}
Henceforth, for any two vector fields $U$ and $W$, $[U,W] = UW - WU$ denotes their commutator. In the next proposition we collect some  important identities from \cite{G}. 

\begin{prop}\label{Est}
The following identities hold.
\begin{itemize}
\item[(i)] $ \operatorname{div} Z=Q$;
\item[(ii)] $[X_i,Z]u = X_i u \ \ \ \ i=1,...,N$;
\item[(iii)] $\operatorname{div} ( \rho^{-\ell} Z)= (Q-\ell) \rho^{-\ell}$;
\item[(iv)] $\operatorname{div} ( \rho^{-Q} \log \rho\ Z)=  \rho^{-Q}$;
\item[(iv)] $\operatorname{div} X_i=0$.
\end{itemize}
\end{prop}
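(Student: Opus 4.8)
The plan is to verify each identity by a direct computation, using only the explicit form of the vector fields in \eqref{df} and \eqref{Z}, the product rule $\operatorname{div}(fW) = f \operatorname{div} W + <\nabla f, W>$ for a scalar $f$ and a vector field $W$, and two elementary scaling facts: that $|z|^\gamma$ is $Z$-homogeneous of degree $\gamma$ (equivalently $\sum_{i=1}^m z_i \partial_{z_i}|z|^\gamma = \gamma |z|^\gamma$), and that $Z\rho = \rho$ from \eqref{hg}. Since every assertion is a routine identity, there is no genuine obstacle here; the only point that requires a bit of care is the index bookkeeping in (ii) for $i>m$, where the $z$-derivatives inside $Z$ fall on the coefficient $|z|^\gamma$ of $X_{m+j}$.

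First I would dispatch (v) and (i). For (v): if $i\le m$ then $X_i=\partial_{z_i}$ has constant coefficients, so $\operatorname{div} X_i=0$; if $i=m+j$ then $X_{m+j}=|z|^\gamma\partial_{t_j}$, and since its single nonzero component $|z|^\gamma$ depends on $z$ only while the associated derivative is $\partial_{t_j}$, one gets $\operatorname{div} X_{m+j}=\partial_{t_j}|z|^\gamma=0$. For (i): $\operatorname{div} Z=\sum_{i=1}^m\partial_{z_i}z_i+(\gamma+1)\sum_{j=1}^k\partial_{t_j}t_j=m+(\gamma+1)k=Q$ by \eqref{Q}.

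Next, for (ii), with $u\in C^2$ so that mixed partials commute, I would compute $[X_i,Z]u=X_i(Zu)-Z(X_iu)$ componentwise. When $i\le m$, the only coefficients in $Z$ not commuting with $\partial_{z_i}$ are the monomials $z_k$, and $\partial_{z_i}z_k=\delta_{ik}$ contributes exactly $\partial_{z_i}u=X_iu$, while all second-order terms cancel against $ZX_iu$. When $i=m+j$, the term $\sum_k z_k\partial_{z_k}$ of $Z$ now also hits the coefficient $|z|^\gamma$, contributing $\gamma|z|^\gamma\partial_{t_j}u$ to $ZX_{m+j}u$; on the other side $X_{m+j}(Zu)$ produces $(\gamma+1)|z|^\gamma\partial_{t_j}u$ from the term $(\gamma+1)t_j\partial_{t_j}$ in $Z$; the difference is $|z|^\gamma\partial_{t_j}u=X_{m+j}u$, and once more the second-order terms cancel. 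This gives $[X_i,Z]u=X_iu$ for every $i$.

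Finally, (iii) and (iv) follow from the product rule together with (i) and \eqref{hg}: for any scalar $f$ one has $\operatorname{div}(fZ)=f\operatorname{div} Z+<\nabla f,Z>=Qf+Zf$, since the drift term $<\nabla f,Z>$ is precisely the action of the first-order operator $Z$ on $f$. Taking $f=\rho^{-\ell}$ and using $Z\rho^{-\ell}=-\ell\rho^{-\ell-1}Z\rho=-\ell\rho^{-\ell}$ yields $\operatorname{div}(\rho^{-\ell}Z)=(Q-\ell)\rho^{-\ell}$, which is (iii). Taking $f=\rho^{-Q}\log\rho$ and using $Z(\rho^{-Q}\log\rho)=(Z\rho^{-Q})\log\rho+\rho^{-Q}\,\rho^{-1}Z\rho=-Q\rho^{-Q}\log\rho+\rho^{-Q}$ yields $\operatorname{div}(\rho^{-Q}\log\rho\,Z)=Q\rho^{-Q}\log\rho-Q\rho^{-Q}\log\rho+\rho^{-Q}=\rho^{-Q}$, which is (iv); equivalently, (iv) is the limiting case $\ell\to Q$ of (iii), obtained by differentiating (iii) in $\ell$ and setting $\ell=Q$, the logarithm replacing the factor $Q-\ell$ that vanishes there.
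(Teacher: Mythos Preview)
Your computations are correct. Note that the paper does not actually supply a proof of this proposition; it merely states the identities and attributes them to \cite{G}. Your direct verification---using (i) and (v) as immediate divergence computations, handling (ii) by explicit commutator bookkeeping with the key observation that $\sum_k z_k\partial_{z_k}|z|^\gamma=\gamma|z|^\gamma$, and deriving (iii) and (iv) from the product rule $\operatorname{div}(fZ)=Qf+Zf$ together with $Z\rho=\rho$---is exactly the elementary argument one would expect, and fills in what the paper leaves to the reference.
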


We now state the relevant  Hardy type inequality which is crucially needed in the  proof of  our Carleman estimates. It follows from the proof of Corollary 4.1 in \cite{G}. 
\begin{lemma}\label{hardy}
Let $u \in C_0^{\infty}(\R^N\setminus\{0\}).$ For every $\ell \in \R,$ with $\ell \ne Q,$ one has 
\begin{eqnarray}\label{h1}
\int \frac{u^2}{\rho^{\ell}} \psi \leq\left(\frac{4}{\ell-Q}\right)^2 \int \frac{(Zu)^2}{\rho^{\ell}} \psi.
\end{eqnarray}
If instead $\ell = Q$ we obtain
\begin{equation}\label{h2}
\int \frac{u^2}{\rho^{Q}} \psi \leq 4 \int \frac{(\log \rho)^2}{\rho^{Q}}(Zu)^2 \psi.
\end{equation}
 \end{lemma}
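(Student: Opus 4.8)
\textbf{Proof plan for Lemma \ref{hardy}.}

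The plan is to derive both inequalities from an integration-by-parts identity in the radial vector field $Z$, exploiting the divergence formulas in Proposition \ref{Est}(iii)--(iv) together with the key geometric fact \eqref{Zpsi} that $Z\psi = 0$. Fix $\ell \ne Q$ and let $u \in C_0^\infty(\R^N \setminus \{0\})$. First I would write
\begin{equation*}
\int \frac{u^2}{\rho^\ell}\psi = \frac{1}{Q-\ell}\int u^2 \psi \operatorname{div}(\rho^{-\ell} Z),
\end{equation*}
using part (iii). Integrating by parts and recalling that $\psi$ is annihilated by $Z$, the term involving $Z\psi$ drops out and one is left with
\begin{equation*}
\int \frac{u^2}{\rho^\ell}\psi = -\frac{1}{Q-\ell}\int \rho^{-\ell}\, Z(u^2)\, \psi = -\frac{2}{Q-\ell}\int \rho^{-\ell}\, u\, (Zu)\, \psi.
\end{equation*}
(There are no boundary terms since $u$ is compactly supported away from the origin, where $\rho>0$ is smooth.) Applying the Cauchy-Schwarz inequality to the right-hand side gives
\begin{equation*}
\int \frac{u^2}{\rho^\ell}\psi \le \frac{2}{|Q-\ell|}\left(\int \frac{u^2}{\rho^\ell}\psi\right)^{1/2}\left(\int \frac{(Zu)^2}{\rho^\ell}\psi\right)^{1/2},
\end{equation*}
and dividing through by the first factor (which is finite and can be assumed nonzero, else there is nothing to prove) and squaring yields \eqref{h1} with the constant $\left(\tfrac{2}{|Q-\ell|}\right)^2 \le \left(\tfrac{4}{\ell-Q}\right)^2$.

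For the borderline case $\ell = Q$, the same scheme runs with part (iv) in place of part (iii): start from
\begin{equation*}
\int \frac{u^2}{\rho^Q}\psi = \int u^2 \psi \operatorname{div}(\rho^{-Q}\log\rho\; Z),
\end{equation*}
integrate by parts, use $Z\psi = 0$ again, and obtain
\begin{equation*}
\int \frac{u^2}{\rho^Q}\psi = -2\int \frac{\log\rho}{\rho^Q}\, u\, (Zu)\, \psi.
\end{equation*}
Then Cauchy-Schwarz, this time pairing $\rho^{-Q/2}\psi^{1/2} u$ against $\rho^{-Q/2}(\log\rho)(Zu)\psi^{1/2}$, gives
\begin{equation*}
\int \frac{u^2}{\rho^Q}\psi \le 2\left(\int \frac{u^2}{\rho^Q}\psi\right)^{1/2}\left(\int \frac{(\log\rho)^2}{\rho^Q}(Zu)^2\psi\right)^{1/2},
\end{equation*}
and dividing and squaring produces \eqref{h2}.

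The only genuinely delicate point is the justification that the integration by parts produces no boundary contribution and that all integrals are finite: since $u \in C_0^\infty(\R^N\setminus\{0\})$, its support is a compact subset of the region where $\rho$ is smooth and bounded away from $0$, so $\rho^{-\ell}$, $\log\rho$, $\psi$ and their derivatives are all bounded on $\operatorname{supp} u$; hence every integrand is bounded with compact support and the divergence theorem applies directly, with the $Z\psi$ term vanishing identically by \eqref{Zpsi} rather than merely being controlled. I expect no real obstacle here; the content of the lemma is entirely in the structural identities of Proposition \ref{Est} and the homogeneity relation $Z\psi=0$, and the rest is the classical Hardy-inequality manipulation. (One should note, as the authors remark in connection with Theorem \ref{thm2}, that the degeneration of the constant as $\ell \to Q$ is exactly why $\ell = Q$ must be treated separately and why the logarithmic weight is unavoidable in \eqref{h2}.)
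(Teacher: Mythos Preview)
Your proof is correct and follows essentially the same route as the paper: both start from the divergence identities in Proposition~\ref{Est}(iii)--(iv), integrate by parts using $Z\psi=0$, and then close with Cauchy--Schwarz. The only cosmetic difference is that the paper phrases the final step via the numerical inequality $2ab\le \ve a^2+\ve^{-1}b^2$ with $\ve=(Q-\ell)/2$, whereas you divide through and square; your version in fact yields the sharper constant $\bigl(\tfrac{2}{\ell-Q}\bigr)^2$, which is of course bounded by the stated $\bigl(\tfrac{4}{\ell-Q}\bigr)^2$.
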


\begin{proof}
Using (iii) in Proposition \ref{Est} and the divergence theorem, we obtain  
\begin{align*}
& \int u^2 \rho^{-\ell} \psi = \frac{1}{Q-\ell} \int \operatorname{div}(\rho^{-\ell} Z) \psi u^2  
\\
& = \frac{1}{Q-\ell} \int \operatorname{div}(\psi u^2  \rho^{-\ell} Z)  - \frac{1}{Q-\ell} \int  \rho^{-\ell} Z(\psi u^2) 
\\
& = - \frac{2}{Q-\ell} \int u Zu \psi \rho^{-\ell}  -  \frac{1}{Q-\ell} \int u^2 Z \psi \rho^{-\ell} 
\\
& \le \frac{2}{Q-\ell} \left(\int u^2  \rho^{-\ell} \psi\right)^{1/2}  \left(\int (Zu)^2 \psi \rho^{-\ell}\right)^{1/2},
\end{align*}
where we have used \eqref{Zpsi}. The numerical inequality $2ab \le \ve a^2 + \ve^{-1} b^2$, with the choice $\ve = (Q-\ell)/2$, easily implies the desired conclusion \eqref{h1} when $\ell\not= Q$. If $\ell = Q$, we use instead (iv) in Proposition \ref{Est} and we argue similarly to the previous case in order to obtain \eqref{h2}.

\end{proof}

We note that the inequality \eqref{h1} in Lemma \ref{hardy} fails for $\ell=Q$ with a constant in the right-hand side which is independent of the function $u$. To see this, let $\zeta\in C^\infty_0(0,\infty)$, $0\le \zeta\le 1$, $\zeta \equiv 0$ for $0\le r\le 1/2$, $\zeta \equiv 1$ for $r\ge 1$, and consider the functions $u_\ve(z,t) = \zeta(\rho(z,t)/\ve)$, $\ve\in (0,1)$. If we insert such functions in the inequality in Lemma \ref{hardy}, observing that $Z u_\ve = \ve^{-1} Z\rho \zeta'(\rho/\ve) = \ve^{-1} \rho \zeta'(\rho/\ve)$, we see that the integral in the right-hand side is bounded above by
\[
\frac{C}{\ve^2} \int_{\frac{\ve}2 \le \rho\le \ve} \frac{\psi}{\rho^{Q-2}} \le C^\star,
\]
where $C, C^\star>0$ are absolute constants independent of $\ve$. On the other hand, the integral in the left-hand side is bounded below by
\[
\int_{\ve \le \rho\le 1} \frac{\psi}{\rho^{Q}}  = \ve^Q \int_{\frac{1}\ve \ge \rho\le 1} \frac{\psi}{\rho^{Q}}  = \ve^Q \int_1^{\frac 1{\ve}} r^{-Q} \int_{\p B_r} \frac{\psi}{|\nabla \rho|} dH_{N-1} dr,
\]
where in the last equality we have used Federer's coarea formula. Now, a scaling argument, and the fact that $\psi$ is $\delta_\la$-homogeneous of degree zero, show that 
\[
\int_{\p B_r} \frac{\psi}{|\nabla \rho|} dH_{N-1} = \sigma r^{Q-1},
\]
where $\sigma>0$ is an absolute constant. We thus conclude that as $\ve\to 0^+$, 
\[
\int_{\ve \le \rho\le 1} \frac{\psi}{\rho^{Q}}  = \sigma \log(1/\ve) \to + \infty,
\]
This shows that \eqref{h1} in Lemma \ref{hardy} cannot possibly hold when $\ell = Q$.
 However, the following weaker inequality is true. We will need it in the proof of our refined estimate in Theorem \ref{thm3}. 

\begin{lemma}\label{hardy2}
Let $u \in C_0^{\infty}(B_R \setminus B_{aR})$ for some $0<a<1$.  Then, the following inequality holds
\begin{eqnarray*}
\int \frac{u^2}{\rho^{Q}} \psi \leq \frac{4}{a} \int \frac{(Zu)^2}{\rho^{Q}} \psi.
\end{eqnarray*}
\end{lemma}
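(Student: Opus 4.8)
\textbf{Proof plan for Lemma \ref{hardy2}.}

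The plan is to repeat the divergence-theorem computation from the proof of Lemma \ref{hardy}, but to be careful about which term we keep. Starting from the identity $\operatorname{div}(\rho^{-Q} Z) = 0$ (the case $\ell = Q$ of Proposition \ref{Est}(iii)), integration by parts against $\psi u^2$ gives
\begin{align*}
0 &= \int \operatorname{div}(\rho^{-Q} Z)\, \psi u^2 = -\int \rho^{-Q}\, Z(\psi u^2) = -2\int u\, Zu\, \psi\, \rho^{-Q},
\end{align*}
where $Z\psi = 0$ by \eqref{Zpsi} was used. This does \emph{not} directly produce a Hardy inequality, so instead the idea is to start from Proposition \ref{Est}(iii) with a \emph{different} exponent. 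A clean route: use the vector field $\rho^{-Q} \rho\, Z = \rho^{-(Q-1)} Z$, for which Proposition \ref{Est}(iii) gives $\operatorname{div}(\rho^{-(Q-1)}Z) = \rho^{-(Q-1)}$. Then
\begin{align*}
\int \frac{u^2}{\rho^{Q-1}} \psi &= \int \operatorname{div}(\rho^{-(Q-1)} Z)\, \psi u^2 = -\int \rho^{-(Q-1)} Z(\psi u^2) = -2\int u\, Zu\, \psi\, \rho^{-(Q-1)}.
\end{align*}
On the support of $u$ we have $\rho \geq aR$, hence $\rho^{-Q} \leq (aR)^{-1}\rho^{-(Q-1)}$, i.e. $\rho^{-(Q-1)} = \rho \cdot \rho^{-Q}$; rewriting the last display with this substitution and applying Cauchy--Schwarz,
\begin{align*}
\int \frac{u^2}{\rho^{Q-1}} \psi &= -2\int (\rho^{1/2} u)\,(\rho^{-1/2} Zu)\, \psi\, \rho^{-Q} \leq 2\left(\int \frac{\rho u^2}{\rho^{Q}}\psi\right)^{1/2}\left(\int \frac{(Zu)^2}{\rho\,\rho^{Q}}\psi\right)^{1/2}.
\end{align*}
Since $\rho\,\rho^{-Q} = \rho^{-(Q-1)}$, both integrals on the right are of the same homogeneity as the left side, so absorbing one factor gives $\int \rho^{-(Q-1)} u^2 \psi \leq 4 \int \rho^{-(Q-1)}(Zu)^2 \psi / \rho$. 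Wait --- this still has a stray $\rho$; the correct bookkeeping is that $\int \rho u^2 \rho^{-Q}\psi$ and $\int (Zu)^2 \rho^{-1}\rho^{-Q}\psi$ multiply to the square of $\int \rho^{-(Q-1)}u^2\psi$ only after matching powers, which they do: $(\rho^{1/2}u)(\rho^{-1/2}Zu)$ has the two factors $\rho u^2 \rho^{-Q}$ and $\rho^{-1}(Zu)^2\rho^{-Q}$, and $\rho\cdot\rho^{-Q} = \rho^{-1}\cdot\rho^{-Q}\cdot\rho^{2} $...

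Let me instead give the cleaner argument that the authors surely intend: multiply and divide by appropriate powers of $\rho$ using $\rho \geq aR$ only at the very end. From $0 = -2\int u\,Zu\,\psi\,\rho^{-Q}$ nothing follows, so the honest path is: apply \eqref{h1} is unavailable at $\ell = Q$, but for $\ell = Q - 1 < Q$ Lemma \ref{hardy} gives $\int \rho^{-(Q-1)}u^2\psi \leq \left(\tfrac{4}{Q-1-Q}\right)^2\int \rho^{-(Q-1)}(Zu)^2\psi = 16\int\rho^{-(Q-1)}(Zu)^2\psi$. Hmm, the constant $4/a$ suggests instead a direct estimate. The key step I expect to be the crux: start from $\int \rho^{-Q}u^2\psi$, write $\rho^{-Q} = \tfrac{1}{1-Q+Q}\cdots$ --- actually use Proposition \ref{Est}(iii) at exponent $\ell$ near $Q$ and track the $\tfrac{1}{Q-\ell}$ blow-up against the gain $R^{\ell - Q} \sim a^{\ell-Q}$ from restricting to $\rho \geq aR$; optimizing in $\ell$ (or simply using that on $B_R\setminus B_{aR}$, $\rho^{-Q} \le (aR)^{-1}\rho^{-(Q-1)}$ and $\rho^{-(Q-1)} \le R \rho^{-Q}$) reduces Lemma \ref{hardy2} to the already-proven \eqref{h1} with $\ell = Q-1$ at the cost of a factor involving $a$.

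\textbf{The main obstacle} is precisely that the natural integration by parts degenerates to $0=0$ at $\ell = Q$, so one cannot mimic Lemma \ref{hardy} verbatim; the workaround is to perturb the exponent to $\ell = Q - 1$ (or to insert the cutoff-induced weight $\rho/(aR) \ge 1$ before integrating by parts), apply \eqref{h1}, and then pay the difference between $\rho^{-Q}$ and $\rho^{-(Q-1)}$ using $aR \le \rho \le R$ on $\operatorname{supp} u$, which is exactly where the constant $4/a$ comes from. The density of $C_0^\infty(B_R\setminus B_{aR})$ in the relevant space then lets us pass from smooth $u$ to the general case, as in the preceding lemma.
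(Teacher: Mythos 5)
Your final plan is essentially the paper's own proof: the paper simply shifts the exponent the other way, applying \eqref{h1} with $\ell=Q+1$ and then using $aR\le\rho\le R$ on $\operatorname{supp} u$ to trade $\rho^{-Q}$ for $\rho^{-(Q\pm 1)}$ at the cost of a factor $1/a$, exactly as you describe with $\ell=Q-1$. One small bookkeeping point: with the constant $\left(\tfrac{4}{\ell-Q}\right)^2=16$ as literally stated in \eqref{h1} your chain yields $\tfrac{16}{a}$, and to land on the stated $\tfrac{4}{a}$ one uses the sharper constant $\tfrac{4}{(\ell-Q)^2}$ that the proof of Lemma \ref{hardy} actually provides (as the paper tacitly does); in the applications only a constant $C(a)$ matters, so this is harmless.
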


\begin{proof}
Using \eqref{h1} with $\ell=Q+1$, and the fact that $u$ is supported in $B_R \setminus B_{aR}$,  we have 
\begin{align*}
& \int \frac{u^2}{\rho^{Q}} \psi  \leq  R \int \frac{u^2}{\rho^{Q+1}} \psi
\leq 4 R \int \frac{(Zu)^2}{\rho^{Q+1}} \psi 
 \leq \frac{4R}{aR} \int \frac{(Zu)^2}{\rho^{Q}} \psi = \frac{4}{a} \int \frac{(Zu)^2}{\rho^{Q}} \psi, 
\end{align*}
where in the last inequality we have used the simple observation that $\frac{1}{\rho} \leq \frac{1}{aR}$.

\end{proof}

We close this section by introducing the relevant notion of vanishing to infinite order.

\begin{dfn}\label{v0}
  We  say  that $u$ vanishes to infinite order at the origin if for every $\ell>0$ one has as $r \rightarrow 0$,
 \begin{align}\label{vanLp}
 \int_{B_r} |u|^2 \psi  = O(r^\ell).
 \end{align}
 \end{dfn}


 \section{Proof of Theorem \ref{thm2}}

In this section we prove Theorem \ref{thm2}. Our plan is to first obtain a suitable bound from below for the right-hand side of \eqref{est1}, see \eqref{rt4}, and then establish a bound from above of the left-hand side in terms of the same quantity. First, by a limiting argument, it suffices to establish the estimate \eqref{est1} when $u$ is smooth and $\operatorname{supp} u \subset B_R\setminus \{0\}$. We define $v=\rho^{-\beta}u$, so that  $u=\rho^{\beta}v,$ with $\beta$  is to be determined later (depending on $\alpha$ and $Q$). Then, we have  
$$\Ba u=v\Ba (\rho^{\beta})+2 <X \rho^{\beta},X v>  + \Ba v \rho^{\beta}.$$
Now, we calculate the first two terms in the right-hand side of the above equation. By \eqref{ii} one has
\begin{eqnarray*}
\Ba(\rho^{\beta})= \left(\beta(\beta-1)\rho^{\beta-2}+(Q-1) \beta \rho^{\beta-2}\right) \psi
= \beta(\beta+Q-2)\rho^{\beta-2} \psi.
\end{eqnarray*}
Using instead \eqref{h10} we obtain
\[
2\sum_{i=1}^N X_i \rho^{\beta} X_i v =2\beta \rho^{\beta-2} \psi Zv.
\]
Therefore,
\begin{eqnarray}\label{b1}
\Ba u&= v\psi\left(\beta(\beta+Q-2)\rho^{\beta-2}\right)+2\beta \rho^{\beta-2} \psi Zv+ \Ba v \rho^{\beta}.
\end{eqnarray}
Using  $(a+b)^2\geq a^2+2ab$, with $a= 2\beta \rho^{\beta-2} \psi Zv$, and with  $b$ being the  rest of the terms in \eqref{b1},  we  obtain 
\begin{eqnarray*}
(\Ba u)^2\geq 4\beta^2 \rho^{2\beta-4} \psi^2 (Zv)^2
+ 4 \beta \rho^{\beta-2} \psi Zv\left(\psi(\beta(\beta+Q-2))\rho^{\beta-2}v+\Ba v \rho^{\beta}\right).
\end{eqnarray*}
We now integrate the latter inequality with respect to the measure $\rho^{-2\alpha} \psi^{-1} dz dt$ on $\R^N$, keeping in mind that, henceforth, we routinely omit indicating in all integrals the domain of integration $B_R$ and the Lebesgue measure $dz dt$. We thus have
\begin{align}\label{rt1}
&\int \rho^{-2\alpha}(\Ba u)^2  \psi^{-1} \geq \int 4\beta^2 \rho^{2\beta-2\alpha-4} (Zv)^2 \psi 
\\
& + \int 4 \beta^2 (\beta+Q-2) \rho^{2\beta-2\alpha-4} v Zv\psi +\int 4\beta  \rho^{2\beta-2\alpha-2} Zv \Ba v  
\notag\\ 
&= 4\beta^2 \int   \rho^{2\beta-2\alpha-4} (Zv)^2  \psi  
+ 2 \beta^2 (\beta+Q-2) \int  \rho^{2\beta-2\alpha-4} Z(v^2) \psi
\notag\\
& + 4\beta \int   \rho^{2\beta-2\alpha-2} Zv  \Ba v.
\notag
\end{align}
We now choose $\beta = \frac{2\alpha+4 - Q}2$, which gives $2\beta - 2 \alpha - 4 = - Q$. We are going to be interested exclusively in values of $\beta >0$, which amounts to taking $\alpha > \frac{Q-4}2$. With such choice we obtain
\[
\int \rho^{2\beta-2\alpha-4} Z(v^2) \psi = \int \rho^{-Q} Z(v^2) \psi.
\]
On the other hand, \eqref{Zpsi} and (iii) in Proposition \ref{Est}, give
\[
\operatorname{div}(\rho^{-Q} \psi v^2 Z) = \rho^{-Q} Z(v^2) \psi.
\]
We thus find
\begin{equation}\label{zero}
\int \rho^{2\beta-2\alpha-4} Z(v^2) \psi = 0.
\end{equation}
Next, we note that with our choice of $\beta$ the last term in the right-hand side of \eqref{rt1} becomes
\begin{equation}\label{rt2}
4\beta  \int  \rho^{2\beta-2\alpha-2} Zv  \Ba v  = 4\beta  \int \rho^{-Q+2} Zv \Ba v.
\end{equation}
In order to estimate the integral in the right-hand side we  use the following Rellich type identity in Lemma 2.11 in \cite{GV}:
\begin{align*}
& \int_{\partial B_R} |Xv|^2<\mathscr Z,\nu>  = 2 \int_{\partial B_R} X_i v <X_i,\nu> \mathscr Z v
\\
& -  2\int_{B_R} (\operatorname{div} X_i) X_iv \mathscr Z v  - 2 \int_{B_R}  X_iv [X_i,\mathscr Z]v
\notag\\
& + \int_{B_R} \operatorname{div} \mathscr Z |Xv|^2     - 2 \int_{B_R} \mathscr Z v  \Ba v,
\notag
\end{align*}
where $\mathscr Z$ is any smooth vector field. Applying this identity with the choice $\mathscr Z =\rho^{-Q+2}Z$, noting that since $v$ is compactly supported in $B_R \setminus \{0\}$ the boundary terms do not appear, that from (iv) in Proposition \ref{Est} we have $\operatorname{div} X_i=0$, and that from (iii) we have $\operatorname{div} (\rho^{-Q+2}Z) = 2 \rho^{-Q+2}$, we conclude 
\begin{align}\label{ok11}
& 4 \beta \int \rho^{-Q+2} Zv \Ba v =2 \beta  \int \operatorname{div} (\rho^{-Q+2}Z) |Xv|^2  -4 \beta \int X_i v [X_i, \rho^{-Q+2} Z] v
\\
& = 4\beta \int \rho^{-Q+2} |Xv|^2  -4 \beta \int X_i v [X_i, \rho^{-Q+2} Z] v.
\notag
\end{align}
Next, by (ii) in Proposition \ref{Est} we have 
\[
[X_i,\rho^{-Q+2}Z]v = \rho^{-Q+2}[X_i, Z]v+X_i(\rho^{-Q+2})Zv=\rho^{-Q+2}X_iv+(2-Q) \rho^{-Q+1}X_i(\rho)Zv.
\] 
Combining this observation with \eqref{h10}, we find
\begin{align}\label{ok1}
& -4 \beta \int X_i v [X_i, \rho^{-Q+2} Z] v= - 4 \beta \int \rho^{-Q+2} |Xv|^2
\\
& + 4 \beta (Q-2) \int \rho^{-Q} (Zv)^2 \psi.
\notag
\end{align}
Substituting this conclusion in \eqref{ok11}, we have
\begin{equation} \label{rt3}
4 \beta \int \rho^{-Q+2} Zv \Ba v  = 4 \beta (Q-2) \int \rho^{-Q} (Zv)^2 \psi \geq 0,
\end{equation}
where the last inequality follows from the fact that $Q > 2$ and $\beta>0$.
Combining \eqref{rt1}, \eqref{zero} and \eqref{rt3}, we finally obtain that 
\begin{equation}\label{rt4}
\int_{B_R} \rho^{-2\alpha}  (\Ba u)^2 \psi^{-1} \geq 4 \beta^2 \int_{B_R} \rho^{-Q}  (Zv)^2 \psi. 
\end{equation}

On the other hand, recalling our choice $2\alpha - 2 \beta + 4 = Q$,  and applying the Hardy inequality \eqref{h1} in Theorem \ref{hardy} with $\ell = Q-\ve$, we see that the first term in the left-hand side of \eqref{est1}   can be controlled from above in the following way, 
\begin{align*}
\alpha^2 \int \rho^{-2\alpha-4+\ve} u^2 \psi & = \alpha^2 \int \rho^{-2\alpha-4+\ve} \rho^{2 \beta}  v^2 \psi = \alpha^2 \int \rho^{-Q+\ve}  \psi v^2
\\
& \leq \frac{16 \alpha^2}{\ve^2}\int \rho^{-Q+\ve} (Zv)^2 \psi \leq \frac{16 \alpha^2}{\ve^2} R^{\ve}\int_{B_R} \rho^{-Q} (Zv)^2  \psi.
\end{align*}
Combining this estimate with \eqref{rt4}, we thus obtain
\begin{equation}\label{rt5}
\alpha^2 \int \rho^{-2\alpha-4+\ve} u^2 \psi \le \frac{4 \alpha^2}{\beta^2} \frac{R^{\ve}}{\ve^2} \int_{B_R} \rho^{-2\alpha}  (\Ba u)^2 \psi^{-1}
\end{equation}

Finally, we show how to incorporate  the integral  $\int_{B_R} \rho^{-2\alpha-2+\ve} |Xu|^2 $ in the left-hand side of \eqref{est1} by a standard interpolation argument. We first observe that
\[
|Xu|^2 = \frac 12 \Ba(u^2) - u \Ba u,
\]
and therefore
\begin{equation}\label{Xu}
\int \rho^{-2\alpha-2+\ve} |Xu|^2 = \frac 12 \int \rho^{-2\alpha-2+\ve} \Ba(u^2) - \int \rho^{-2\alpha-2+\ve}  u \Ba u.
\end{equation}
Using (iv) in Proposition \ref{Est} we integrate by parts in the first term in the right-hand side, obtaining
\begin{align*}
& \int \rho^{-2\alpha-2+\ve} \Ba(u^2) = - \int <X(\rho^{-2\alpha-2+\ve}),X(u^2)>
\\
& = (2\alpha+2-\ve) \int \rho^{-2\alpha-3+\ve} <X\rho,X(u^2)> = (2\alpha+2-\ve) \int \rho^{-2\alpha-4+\ve} Z(u^2) \psi,
 \end{align*}
 where in the last equality we have used \eqref{h10}. Recalling that we have set $u = \rho^\beta v$, we thus find
 \begin{align*}
 &  \int \rho^{-2\alpha-2+\ve} \Ba(u^2) = (2\alpha+2-\ve) \int \rho^{-2\alpha-4 + 2\beta+\ve} Z(v^2) \psi + 2\beta (2\alpha+2-\ve) \int \rho^{-2\alpha-4 +2\beta+\ve} v^2 \psi
 \\
 & = 2 (2\alpha+2-\ve) \int \rho^{-Q+\ve} v  Zv \psi + 2\beta(2\alpha+2-\ve) \int \rho^{-Q+\ve} v^2 \psi
 \\
 & \le  2 (2\alpha+2-\ve) \left(\int \rho^{-Q+2\ve} v^2 \psi\right)^{1/2} \left(\int \rho^{-Q} (Zv)^2 \psi\right)^{1/2}  + 2\beta(2\alpha+2-\ve) \int \rho^{-Q+\ve} v^2 \psi.
 \end{align*}
Using \eqref{h1} in Lemma \ref{hardy} and \eqref{rt4} we can thus bound
\begin{align*}
& \frac 12 \int \rho^{-2\alpha-2+\ve} \Ba(u^2) \le (2\alpha+2-\ve)(16 \beta + 2\ve)\frac{R^\ve}{\ve} \int \rho^{-Q} (Zv)^2 \psi
 \\
 & \le \frac{(2\alpha+2-\ve)(16 \beta + 2\ve)}{4\beta^2}\frac{R^\ve}{\ve}\int_{B_R} \rho^{-2\alpha}  (\Ba u)^2 \psi^{-1} 
\end{align*}
Finally, we have
\begin{align*}
& \left|- \int \rho^{-2\alpha-2+\ve}  u \Ba u\right| \le \left(\int_{B_R} \rho^{-2\alpha}  (\Ba u)^2 \psi^{-1}\right)^{1/2} \left(\int \rho^{-2\alpha-4 +2\ve}  u^2 \psi\right)^{1/2}.
\end{align*} 
Now, using again \eqref{h1} in Lemma \ref{hardy} and \eqref{rt4} we find
\begin{align*}
& \int \rho^{-2\alpha-4 +2\ve}  u^2 \psi = \int \rho^{-Q +2\ve}  v^2 \psi \le \frac{R^{2\ve}}{\beta^2 \ve^2}\int_{B_R} \rho^{-2\alpha}  (\Ba u)^2 \psi^{-1}.
\end{align*}
Substituting in the latter inequality, we conclude
\begin{align*}
& \left|- \int \rho^{-2\alpha-2+\ve}  u \Ba u\right| \le \frac{R^{\ve}}{\beta \ve}\int_{B_R} \rho^{-2\alpha}  (\Ba u)^2 \psi^{-1}.
\end{align*}
Inserting the relevant estimates in \eqref{Xu} we finally have
\begin{equation}\label{Xu2}
\int \rho^{-2\alpha-2+\ve} |Xu|^2 \le \left\{\frac{(2\alpha+2-\ve)(16 \beta + 2\ve) + 4\beta}{4\beta^2}\right\} \frac{R^\ve}{\ve}\int_{B_R} \rho^{-2\alpha}  (\Ba u)^2 \psi^{-1}. 
\end{equation}
The desired conclusion \eqref{est1} now follows from \eqref{rt5} and  \eqref{Xu2}.


\section{Quantitative uniqueness}\label{S:qu}

In this section we prove Theorem \ref{thm3}. Before we proceed with the proof, we establish a Caccioppoli  type inequality which  constitutes one of its essential ingredients.

\begin{lemma}\label{energy}
Let $u$   be a solution to \eqref{e0}, with $V$ satisfying \eqref{vasump}. For any $R>0$ and $0<a<1$, there exists a universal $C = C(a)>0$, such that 
\begin{equation}\label{en}
\int_{B_{(1-a)R} }|Xu|^2 \leq   \frac{C}{a^2R^2} \int_{B_R} ( 1+ K) u^2  \psi.
\end{equation}
\end{lemma}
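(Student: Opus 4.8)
The plan is to test the equation $-\Ba u = Vu$ against $\eta^2 u$, where $\eta$ is a standard cutoff function adapted to the gauge balls: $\eta \in C^\infty_0(B_R)$, $\eta \equiv 1$ on $B_{(1-a)R}$, $0 \le \eta \le 1$, with $|X\eta| \le \frac{C}{aR}$. The last bound is the key geometric point: since the vector fields $X_i$ are homogeneous of degree one with respect to the dilations \eqref{dil} and $\rho$ is homogeneous of degree one, a cutoff of the form $\eta(z,t) = \chi(\rho(z,t)/R)$ with $\chi$ a one-variable cutoff satisfies $X_i \eta = R^{-1}\chi'(\rho/R) X_i \rho$, so that $|X\eta|^2 = R^{-2}|\chi'(\rho/R)|^2 \psi \le \frac{C}{a^2R^2}\psi$, using \eqref{psi} and $0 \le \psi \le 1$. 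This $\psi$-factor in $|X\eta|^2$ is exactly what produces the $\psi$-weight on the right-hand side of \eqref{en}.

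Next I would integrate by parts using (iv) of Proposition \ref{Est} (namely $\operatorname{div} X_i = 0$), which gives
\begin{align*}
\int \eta^2 |Xu|^2 &= -\int u\, \langle X(\eta^2), Xu\rangle + \int \eta^2 u (-\Ba u)\\
&= -2\int \eta u\, \langle X\eta, Xu\rangle + \int \eta^2 V u^2.
\end{align*}
For the first term I apply Cauchy–Schwarz followed by the numerical inequality $2ab \le \tfrac12 a^2 + 2b^2$ to absorb $\tfrac12 \int \eta^2 |Xu|^2$ into the left-hand side, leaving $2\int u^2 |X\eta|^2$. For the second term I use the hypothesis \eqref{vasump}, $|V| \le K\psi$, to bound it by $K\int \eta^2 u^2 \psi \le K \int_{B_R} u^2 \psi$. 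Combining and using $|X\eta|^2 \le \frac{C}{a^2R^2}\psi$ together with $\eta \equiv 1$ on $B_{(1-a)R}$ yields
\[
\int_{B_{(1-a)R}} |Xu|^2 \le \int \eta^2|Xu|^2 \le \frac{C}{a^2R^2}\int_{B_R} u^2 \psi + 2K\int_{B_R} u^2 \psi \le \frac{C}{a^2R^2}\int_{B_R}(1+K)u^2\psi,
\]
which is \eqref{en} (after relabelling the universal constant and using $a^2R^2 \le$ const, or simply absorbing $R^2$ appropriately; note $\frac{1}{a^2R^2} \ge$ the coefficient of the $K$-term up to a constant when $R$ is bounded, and in any case one can state the bound as written by taking $C$ large).

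The only genuinely delicate point is the justification of the integration by parts and the use of the equation in the $S^{2,2}$ setting rather than for classical solutions; this is handled by the standard density/approximation argument, approximating $u$ in $S^{2,2}(B_R)$ and noting all integrands converge. Everything else is routine once the cutoff estimate $|X\eta|^2 \le \frac{C}{a^2R^2}\psi$ is in hand, so I regard establishing that estimate — i.e. choosing $\eta$ as a function of $\rho$ and invoking homogeneity — as the conceptual heart of the argument, even though it is short.
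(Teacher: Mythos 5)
Your proposal is correct and follows essentially the same route as the paper: the paper also tests the weak formulation against $f(\rho)^2 u$ with a radial cutoff $f$, uses $|X\rho|^2=\psi$ to get the $\psi$-weighted gradient bound on the cutoff, and absorbs the cross term via Cauchy--Schwarz before invoking $|V|\le K\psi$. The small caveat you flag about merging the $K$-term into the $\frac{C}{a^2R^2}(1+K)$ form is present in the paper's argument as well (and is harmless since the lemma is applied with $R\le R_0$), so nothing further is needed.
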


\begin{proof}
Let $f:\R \to \R$ be a smooth cut-off such that $f(\sigma) \equiv 1$ for $|\sigma|\leq (1-a)R$, $f(\sigma)\equiv 0$ for $|\sigma| \geq R$,
$|f'(\sigma)| \leq \frac{C}{aR}$, and consider
the test function $\phi= f(\rho)^2  u$ in the weak formulation of \eqref{e0}. Using the hypothesis \eqref{vasump}, we obtain from standard computations
\begin{align}\label{e7}
& \int |Xu|^2 f(\rho)^2  \leq  2 \int |u| |Xu| |f(\rho)| f'(\rho) |X\rho| + K f(\rho)^2 u^2\psi
\end{align}
Keeping \eqref{psi} in mind, the Cauchy-Schwarz  inequality gives 
\begin{equation}\label{e8}
2 \int |u| |Xu| |f(\rho)| f'(\rho) |X\rho| \leq \frac{1}{2} \int |Xu|^2 f(\rho)^2 + C \int f'(\rho)^2 u^2 \psi.
\end{equation}
Subtracting the first integral in the right-hand side of \eqref{e8} from the left-hand side in \eqref{e7}, using the bound on $f'$ as the fact that $f(\rho) \equiv 1$ in $B_{(1-a)R}$, the desired conclusion follows.

\end{proof}

We now establish the estimate \eqref{est} in Theorem \ref{thm3}. Such inequality is needed to prove the quantitative uniqueness result in Theorem \ref{main}. We note that \eqref{rt5} above only allows  the following bound
\[
O(\alpha^2) \int \rho^{-2\alpha -4 + \ve} u^2 \psi \leq  \int \rho^{-2\alpha} (\Ba u)^2 \psi^{-1}.
\]
This is why, similarly to the Euclidean case, we are forced to work with the modified  weights containing an additional exponential term. This precisely accounts for  the $O(\alpha^3)$ factor  in front of  the integral $\int \rho^{-2\alpha- 4+\ve} u^2 e^{2\alpha \rho^{\ve}} \psi$ in the estimate  \eqref{est} above.

\begin{proof}[Proof of Theorem \ref{thm3}]

The proof is divided into two steps:

\medskip

\noindent \emph{Step 1}. We first show that
\begin{align}\label{est4}
&\alpha^3\int \rho^{-2\alpha- 4+\ve} u^2 e^{2\alpha \rho^{\ve}} \psi + \alpha^2 \int \rho^{-2\alpha -4} u^2 e^{2\alpha \rho^{\ve}} \psi  \leq \tilde C\int \rho^{-2\alpha}  e^{2\alpha \rho^{\ve}} (\Ba u)^2 \psi^{-1}.
\end{align}
for some $\tilde C$ universal depending also on $\ve, a$. Without restriction, we assume that $u$ be smooth, and let  $v=\rho^{-\beta} e^{\alpha \rho^{\ve}}u$, where as before 
\begin{equation}\label{choice}
\beta = \frac{2\alpha+4 - Q}2
\end{equation}
or equivalently $2\beta - 2 \alpha - 4 = - Q$. With such choice we have $$u=\rho^{\beta} e^{-\alpha \rho^{\ve}} v.$$ This gives 
$$\Ba u=v\Ba(\rho^{\beta}  e^{-\alpha \rho^{\ve}})+2 <X(\rho^{\beta} e^{-\alpha \rho^{\ve}}),X v> + \rho^{\beta} e^{-\alpha \rho^{\ve}} \Ba v.$$
By a standard  calculation we obtain 
\begin{align*}
&\Ba(\rho^{\beta} \ e^{-\alpha \rho^{\ve}})\\
&= \left(\alpha^2 \ve^2 \rho^{\beta+2\ve-2} + \beta(\beta+Q-2)\rho^{\beta-2}- \alpha\ve\left((2\beta+\ve+ Q-2)\right) \rho^{\beta+\ve-2}\right) e^{-\alpha \rho^{\ve}} \psi,   
\notag
\end{align*}
and similarly we have 
\begin{align*}
&2 <X( \rho^{\beta} e^{-\alpha \rho^{\ve}}),X v>= \left(2\beta \rho^{\beta-2} -2 \ve \alpha \rho^{\beta+\ve-2}\right) e^{-\alpha \rho^{\ve}} Zv\ \psi.
\end{align*}
We infer
\begin{align}\label{p1}
& e^{\alpha \rho^{\ve}} \Ba u= 2\beta \rho^{\beta-2} Zv\ \psi +  \rho^{\beta}   \Ba v - 2 \ve \alpha \rho^{\beta+\ve-2} Zv\ \psi 
\\
& + \left(\alpha^2 \ve^2 \rho^{\beta+2\ve-2} + \beta(\beta+Q-2)\rho^{\beta-2}- \alpha\ve\left((2\beta+\ve+ Q-2)\right) \rho^{\beta+\ve-2}\right)  v \psi.
\notag
\end{align}
Using the trivial inequality $(a+b)^2\geq a^2+2ab$, with $a= 2\beta \rho^{\beta-2} Zv\ \psi$, and $b$ given by the remaining terms in the right-hand side of  \eqref{p1}, we obtain
\begin{align}\label{c0}
&\int \rho^{-2\alpha} e^{2\alpha \rho^{\ve}}(\Ba u)^2  \psi^{-1} \geq 4 \beta^2 \int  \rho^{2\beta-2\alpha-4} (Zv)^2 \psi +4 \beta \int \rho^{2\beta-2\alpha-2} Zv   \Ba v
\\
& 
- 8 \alpha \beta \ve \int \rho^{2\beta-2\alpha-4+\ve} (Zv)^2 \psi + 2 \beta^2 (\beta+Q-2) \int \rho^{2\beta-2\alpha-4} Z(v^2)  \psi 
\notag  \\
&  - 2 \alpha \beta \ve(2\beta + \ve + Q -2)\int \rho^{2\beta-2\alpha-4+\ve} Z(v^2) \psi  + 2\alpha^2 \beta \ve^2 \int  \rho^{2\beta-2\alpha-4+2\ve}Z(v^2) \psi.
\notag
\end{align}
Now, from \eqref{zero} above we find
\[
\int \rho^{2\beta-2\alpha-4} Z(v^2) \psi  = \int \rho^{-Q} Z(v^2) \psi  = 0,
\]
whereas \eqref{rt3} gives
\[
4 \beta \int \rho^{2\beta-2\alpha-2} Zv   \Ba v = 4 \beta \int \rho^{-Q+2} Zv   \Ba v \ge 0.
\]
Furthermore, an integration by parts, combined with \eqref{hg} and \eqref{Zpsi}, gives
\[
\int \rho^{2\beta-2\alpha-4+\ve} Z(v^2) \psi = -\ve \int \rho^{-Q+\ve} v^2 \psi,  
\]
\[
\int \rho^{2\beta-2\alpha-4+2\ve} Z(v^2) \psi = -2\ve \int \rho^{-Q+2\ve} v^2 \psi.  
\]
Inserting all of the above in \eqref{c0}, we obtain
\begin{align}\label{c00}
&\int \rho^{-2\alpha} e^{2\alpha \rho^{\ve}}(\Ba u)^2  \psi^{-1} \geq 4 \beta^2 \int  \rho^{-Q} (Zv)^2 \psi - 8 \alpha \beta \ve \int \rho^{-Q+\ve} (Zv)^2 \psi
\\
& + 2 \alpha \beta \ve^2(2\beta + \ve + Q -2) \int \rho^{-Q+\ve} v^2 \psi - 4 \alpha^2 \beta \ve^3 \int \rho^{-Q+2\ve} v^2 \psi.
\notag
\end{align}
Now  since $2\beta - 2 \alpha - 4 = - Q$, therefore    there exists universal constants $K_1$ and $K_2$ depending only on  $Q$  such that for $\alpha$ sufficiently large
\begin{equation}\label{sim}
K_1 \beta  \leq \alpha \leq K_2 \beta.
\end{equation}
This implies, in particular,
\[
8 \alpha \beta \ve \int \rho^{-Q+\ve} (Zv)^2 \psi \leq  8 K_2 \beta^2  \ve R^{\ve} \int \rho^{-Q} (Zv)^2.
\]
Now, if $R$ is chosen small enough, then the latter inequality implies the following estimate
\begin{equation}\label{sma2}
 \beta^2 \int  \rho^{-Q} (Zv)^2 \psi \geq  8 \alpha \beta \ve \int \rho^{-Q+\ve} (Zv)^2 \psi. 
 \end{equation}
Similarly, using \eqref{sim} again, we can  ensure that for a possibly smaller $R$ if needed, one has
\begin{equation}\label{sma4}
 4 \alpha^2 \beta \ve^3 \int \rho^{-Q+2\ve} v^2 \psi \leq   \alpha \beta \ve^2(2\beta + \ve + Q -2) \int \rho^{-Q+\ve} v^2 \psi.
 \end{equation} 
Using \eqref{sma2} and \eqref{sma4} in \eqref{c00}, we thus obtain
\begin{align}\label{c01}
& \int \rho^{-2\alpha} e^{2\alpha \rho^{\ve}}(\Ba u)^2  \psi^{-1} \geq 3 \beta^2 \int  \rho^{-Q} (Zv)^2 \psi + \alpha \beta \ve^2(2\beta + \ve + Q -2) \int \rho^{-Q+\ve} v^2 \psi.
 \end{align}
Finally,  from the Hardy inequality in  Lemma \ref{hardy2} we  have
\begin{equation}\label{c2}
3 \beta^2 \int \rho^{-Q} |Zv|^2 \psi \geq  C\beta^2 \int \rho^{-Q} \psi v^2, 
\end{equation}
where $C>0$ additionally depends on $a$. The reader should note that this  is precisely the place where we need that  $u$, and consequently $v$, is supported in $B_{R} \setminus B_{aR}$. 
The proof of the desired estimate \eqref{est4} is then completed using  \eqref{c2}  and  \eqref{sim} in \eqref{c01}.

\medskip

\noindent \emph{Step 2.}
Now we show that  there exists $C_1>0$ such that, if with $K$ as in \eqref{vasump} one has 
\begin{align}\label{alpha}
\alpha  \geq C_1 (K^{2/3} +1),
\end{align}
then the following estimate holds
\begin{align}\label{est5}
&\alpha^3\int \rho^{-2\alpha- 4+\ve} u^2 e^{2\alpha \rho^{\ve}}  \psi  + \alpha^2 \int \rho^{-2\alpha -4} u^2 e^{2\alpha \rho^{\ve}} \psi  \leq C \int \rho^{-2\alpha}  e^{2\alpha \rho^{\ve}}  (\Ba u +V u)^2 \psi^{-1}.
\end{align}
for some universal $C>0$ depending also on $a, \ve$. This follows from \eqref{est4} in a straightforward way by noting that from the growth assumption on $V$ in  \eqref{vasump}, using the simple inequality $(a+b)^2 \geq \frac{1}{2}a^2 - 4 b^2$, we obtain 
\begin{align*}
& \int \rho^{-2\alpha}  e^{2\alpha \rho^{\ve}}  (\Ba u +V u)^2 \psi^{-1} 
 \geq \frac{1}{2} \int \rho^{-2\alpha}  e^{2\alpha \rho^{\ve}}  (\Ba u)^2 \psi^{-1} - 4 K^2 \int \rho^{-2 \alpha} e^{2\alpha \rho^{\ve}} u^2 \psi.
\notag
\end{align*}
If now $C_1$ in \eqref{alpha} is chosen large enough in dependence of $\tilde C$ in \eqref{est4}, then we can ensure that   
\[
\alpha^3 > 16 \tilde C K^2
\]
holds. Consequently, we can infer from \eqref{est4} that the following inequality holds
\begin{align*}
4 K^2 \int \rho^{-2 \alpha} e^{2\alpha \rho^{\ve}} u^2 \psi \leq \frac{1}{4} \int \rho^{-2\alpha}  e^{2\alpha \rho^{\ve}} (\Ba u)^2 \psi^{-1}. 
\end{align*}
This implies that 
\[
 \int \rho^{-2\alpha} e^{2\alpha \rho^{\ve}} (\Ba u +V u)^2 \psi^{-1} \geq \frac{1}{4} \int \rho^{-2\alpha}  e^{2\alpha \rho^{\ve}} (\Ba u)^2 \psi^{-1},
\]
from  which the desired estimate \eqref{est5} follows by applying the estimate \eqref{est4} in Step 1.  This completes the proof of the theorem.

\end{proof}

\begin{proof}[Proof of Theorem \ref{main}]
We adapt an Euclidean argument in \cite{Bk1}. For a given $R_1 < R_2$, $A_{R_1, R_2}$ will denote the annulus $B_{R_1} \setminus B_{R_2}$. Let $R_0$ be as in Theorem \ref{thm3} and let $ 0 < R< \frac{R_0}{2}$. Also let  $\phi \in C_{0}^{\infty}(B_{2R})$  such that
\begin{equation}\label{bd1}
\begin{cases}
\phi \equiv 0\ \text{if $\rho < \frac{R}{4}$ and $\rho > \frac{5R}{3}$}
\\
\phi \equiv 1\ \text{in $A_{\frac{R}{3}, \frac{3R}{2}}$.}
\end{cases}
\end{equation}
As in the proof of the energy estimate  in Lemma \ref{energy},  we can take $\phi$  to be  a radial function of the form $f(\rho)$, and therefore we can ensure that  the following bounds hold, 
\begin{equation}\label{bd}
|X \phi| \leq  \frac{C\psi^{1/2}}{R},\ \ \ \ \ \ \ \ | \Ba \phi| \leq \frac{C \psi}{R^2}.
\end{equation}

Using the equation \eqref{e0} satisfied by $u$, from the estimate \eqref{est} applied to $u \phi$, we thus obtain for some universal constant $C>0$ depending on $m,n$ and  $\gamma$, 
\begin{equation}\label{ty11}
\alpha^2 \int  \rho^{-2\alpha - 4} e^{2\alpha \rho^{\ve}} u^2 \phi^2 \psi \leq C \int \rho^{-2\alpha} e^{2 \alpha \rho^{\ve}} (u^2 (\Ba \phi)^2 \psi^{-1}+ |Xu|^2 |X \phi|^2 \psi^{-1}).
\end{equation}
In what follows we respectively indicate with $||f||_R$ and $||f||_{R_1, R_2}$ the $L^{2}$ norm of $f$ in $B_R$ and $A_{R_1, R_2}$. 
From \eqref{bd1}, \eqref{bd}, and the fact that  the functions $X\phi, \Ba \phi$ are supported in $\{ (z,t) \mid \frac{R}{4} < \rho(z,t) < \frac{R}{3}\} \cup \{(z,t) \mid \frac{3R}{2} < \rho(z,t)< \frac{5R}{3}\}$, we obtain from \eqref{ty11}   
\begin{align}\label{bd2}
& || \rho^{-\alpha -2} e^{\alpha \rho^{\ve}} u \psi^{1/2}||_{\frac{R}{3}, \frac{3R}{2}} \leq C\left ( || \rho^{-\alpha -2} e^{\alpha \rho^{\ve}} u \psi^{1/2}||_{\frac{R}{4}, \frac{R}{3}} +  || \rho^{-\alpha -2} e^{\alpha \rho^{\ve}} u \psi^{1/2}||_{\frac{3R}{2}, \frac{5R}{3}} \right)
\\
& + C R \left( || \rho^{-\alpha -2} e^{\alpha \rho^{\ve}} |Xu| ||_{\frac{R}{4}, \frac{R}{3}} +  || \rho^{-\alpha -2} e^{\alpha \rho^{\ve}} |Xu| ||_{\frac{3R}{2}, \frac{5R}{3}} \right).
\notag
\end{align}
Bounding from below the integral in the left-hand side of \eqref{bd2} with one on $A_{\frac{R}{3},R}$, we find for some universal $C>0$,
\begin{align}\label{b2}
& || \rho^{-\alpha -2} e^{\alpha \rho^{\ve}} u \psi^{1/2}||_{\frac{R}{3}, R} \leq C\left ( || \rho^{-\alpha -2} e^{\alpha \rho^{\ve}} u \psi^{1/2}||_{\frac{R}{4}, \frac{R}{3}} +  || \rho^{-\alpha -2} e^{\alpha \rho^{\ve}} u \psi^{1/2}||_{\frac{3R}{2}, \frac{5R}{3}} \right)
\\
& + C R \left( || \rho^{-\alpha -2} e^{\alpha \rho^{\ve}} |Xu| ||_{\frac{R}{4}, \frac{R}{3}} +  || \rho^{-\alpha -2} e^{\alpha \rho^{\ve}} |Xu| ||_{\frac{3R}{2}, \frac{5R}{3}} \right).
\notag
\end{align} 
We now consider the function $h(r) = r^{-\alpha-2} e^{\alpha r^{\ve}}$. One easily checks that, for $R_0$ sufficiently small, the function $r\to h(r)$ is decreasing  in $(0,R_0)$, and moreover the following  inequalities hold for universal $C_4>0, C_5>1$, 
\begin{equation}\label{bd10}
\frac{h(r/4)}{h(r)} \leq C_4^{\alpha},\ \ \ \ \ \ \ 
\frac{h(3r/2)}{h(r) } \leq C_5^{-\alpha}.
\end{equation}
Therefore, possibly taking a smaller $R_0$ in the statement of Theorem \ref{thm3},  using \eqref{bd10} for $R \leq R_0$, we obtain from \eqref{b2} the following unweighted estimate 
\begin{align}\label{bd4}
& ||  u \psi^{1/2}||_{R/3, R} \leq  C C_4^{\alpha} \left ( || u \psi^{1/2}||_{\frac{R}{4}, \frac{R}{3}} + R ||  |Xu| ||_{\frac{R}{4}, \frac{R}{3}}   \right)
\\
& +  C C_5^{-\alpha}  \left(||  u \psi^{1/2}||_{\frac{3R}{2}, \frac{5R}{3}} + R  || |Xu| ||_{\frac{3R}{2}, \frac{5R}{3}} \right).
\notag
\end{align}
Now, the Caccioppoli estimate in Lemma \ref{energy} gives
\begin{equation}\label{bd5}
\begin{cases}
R  ||  |Xu| ||_{\frac{R}{4}, \frac{R}{3}} \leq C(1+K^{1/2}) ||u\psi^{1/2}||_{\frac{R}{2}},
\\
 R|| |Xu| ||_{\frac{3R}{2}, \frac{5R}{3}}   \leq C( 1+ K^{1/2})  || u \psi^{1/2}||_{2R}.
 \end{cases}
\end{equation}
Adding $||u \psi^{1/2}||_{\frac{R}{3}}$ to both sides of  \eqref{bd4}, and using \eqref{bd5},  we obtain for some different constants $C, C_4, C_5$,  
\begin{equation}\label{bd7}
||u\psi^{1/2}||_{R} \leq C( 1+ K^{1/2}) ( C_4^{\alpha} ||u \psi^{1/2}||_{\frac{R}{2}} + C_5^{-\alpha} ||u \psi^{1/2}||_{2R}).
\end{equation}
At this point, similarly to \cite{Bk1}, we choose $\alpha$ sufficiently large such that   the following holds
\[
C( 1+ K^{1/2}) C_5^{-\alpha}   ||u \psi^{1/2}||_{2R} \leq \frac{1}{2} ||u \psi^{1/2}||_R.
\]
(The reader should note here that the  variable $\tau$ in \cite{Bk1} plays the  same role as  our $\alpha$). A standard real analysis argument as on p. 78-79  in \cite{Bk1}, now shows the existence of universal $K_2,  M>0$ (depending also on $K_0$ in Theorem \ref{main}) such that the following inequality holds, 
\begin{equation}\label{f1}
||u \psi^{1/2}||_{R}^{1+K_2} e^{- M(K^{2/3} +1 )}  \leq ||u \psi^{1/2}||_{\frac{R}{2}}.
\end{equation}
Note that the estimate \eqref{f1} is analogous to (4.10) in \cite{BG1}, except that the power $K^{2/3}$ replaces $K^{1/2}$. We can thus repeat the standard  iterative  argument in \cite[Sec. 4]{BG1} to conclude the validity of  the estimate \eqref{main1} in Theorem \ref{main}.

\end{proof}

\section{Strong unique continuation for sublinear equations}\label{S:sub}

In this final section we prove Theorem \ref{main4}. Similarly to Theorem 2 in \cite{Ru},  we accomplish this by means of the following Carleman estimate.

\begin{thrm}\label{sub1}
Let $1< q< 2$, and let $f$ satisfy the assumptions \eqref{a1}. For  every $\ve>0$, there exists $C=C(m, k, \ve, \la, q, K, \kappa, c_0)>0$ such that for $\alpha >0$ sufficiently large, and $u \in S^{2,2}_{0}(B_R \setminus \{0\})$ with  $\text{supp}\ u \subset (B_R \setminus \{0\})$, one has
\begin{align}\label{f10}
\alpha^2 \int \rho^{-2\alpha-4+\ve}u^2 \psi+  \rho^{-2\alpha-2+\ve} |u|^q \psi^2   \leq C R^{\ve}  \int \rho^{-2\alpha}(\Ba u + f((z,t), u) \psi)^2 \psi^{-1}.
\end{align}\end{thrm}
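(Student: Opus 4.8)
The plan is to deduce the Carleman estimate \eqref{f10} from the basic estimate \eqref{est1} in Theorem \ref{thm2}, treating the sublinear term $f((z,t),u)\psi$ as a lower-order perturbation that gets absorbed on the left. Write $g = \Ba u + f((z,t),u)\psi$, so that $\Ba u = g - f((z,t),u)\psi$. Using the elementary inequality $(a-b)^2 \le 2a^2 + 2b^2$ together with \eqref{est1}, we get
\begin{align*}
&\alpha^2 \int \rho^{-2\alpha-4+\ve}u^2\psi + \int \rho^{-2\alpha-2+\ve}|Xu|^2 \le CR^{\ve}\int \rho^{-2\alpha}(\Ba u)^2\psi^{-1}
\\
&\qquad \le 2CR^{\ve}\int \rho^{-2\alpha} g^2 \psi^{-1} + 2CR^{\ve}\int \rho^{-2\alpha} f((z,t),u)^2\psi.
\end{align*}
The task is then to control the last integral. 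Here the structural hypotheses \eqref{a1} enter: since $\|u\|_{L^\infty}\le 1$ (which we may assume after the support/scaling reductions, as in the applications), the bound $f((z,t),s)\le \kappa s^{p-1}$ with $p\in(1,2)$ gives $f((z,t),u)^2 \le \kappa^2 |u|^{2p-2} = \kappa^2 |u|^{2p-4}u^2 \le \kappa^2 u^2$ on the region where $|u|\le 1$; more usefully, the condition $0 < sf \le qF$ together with $F(\cdot,s)\ge c_0 s^q$ lets us compare $f((z,t),u)^2$ with a power $|u|^{q}$ times something small. The point is that $2p-2 > 0$ and $q\in(1,2)$, so $|u|^q \le $ const on $B_R$ and moreover $\rho^{-2\alpha}f^2\psi$ is dominated, up to constants absorbing $\kappa, c_0, q$, by $R^{\mathrm{const}}\,\rho^{-2\alpha-4+\ve}u^2\psi$ plus the genuinely sublinear term $\rho^{-2\alpha-2+\ve}|u|^q\psi^2$ that we want on the left.

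Concretely, I would split $f((z,t),u)^2\psi \le \tfrac12 \eta\,\rho^{-4+\ve}u^2\psi + C_\eta \rho^{-2+\ve}|u|^q\psi^2$ pointwise, using Young's inequality on the product of a small power and a larger power of $|u|$ (the exponents $2p-2$ and the target $q$ differ, and one distributes the gap between them and the extra decay in $\rho$), choosing $\eta$ small enough that $2CR^\ve \cdot \tfrac12\eta \le \tfrac12\alpha^2$ for $\alpha$ large; this absorbs the first piece into the left-hand side of \eqref{est1}. What remains is
\[
2CR^\ve C_\eta \int \rho^{-2\alpha-2+\ve}|u|^q\psi^2,
\]
which is \emph{exactly} the second term appearing on the left of \eqref{f10}, so after moving it to the left we obtain
\[
\alpha^2\int \rho^{-2\alpha-4+\ve}u^2\psi + \int \rho^{-2\alpha-2+\ve}|u|^q\psi^2 \le C R^\ve \int \rho^{-2\alpha}g^2\psi^{-1},
\]
which is \eqref{f10} (the $|Xu|^2$ term from \eqref{est1}, being nonnegative, can simply be discarded, or kept if one wants a stronger statement). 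The constant $C$ then depends on $m,k,\ve$ through Theorem \ref{thm2} and on $q,\kappa,c_0,K,\la$ through the splitting, as claimed. One subtlety: the appearance of $\la$ and $K$ in the statement's constant suggests the potential $V$ in \eqref{sub} is meant to be folded in here as well; if so, one first writes $\Ba u + f\psi = (\Ba u + f\psi + Vu) - Vu$, absorbs $Vu$ using \eqref{vasump} and the $\alpha^2\rho^{-2\alpha-4+\ve}u^2\psi$ term exactly as in Step 2 of the proof of Theorem \ref{thm3}, and then proceeds as above — but as literally stated, \eqref{f10} has no $V$, so this is only relevant for the application to Theorem \ref{main4}.

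The main obstacle I anticipate is bookkeeping the interplay of the three small/large parameters: the exponent gap between $2p-2$ and $q$ (both in $(0,2)$, but in no fixed order a priori, since $p,q$ are independent in \eqref{a1} — though \eqref{a0} forces $q\le p$ hence $q \le p < 2$ and $2p-2$ could be below or above $q$), the extra factor $\rho^{\text{something}}\le R^{\text{something}}$ that must be matched to produce precisely the weights $\rho^{-2\alpha-4+\ve}$ and $\rho^{-2\alpha-2+\ve}$, and the largeness of $\alpha$ needed for the absorption. One must be careful that the Young-inequality split is performed with the correct Hölder conjugate exponents so that the power of $|u|$ in the leftover term is exactly $q$ and not some other exponent in $(1,2)$; this forces a specific choice, and I would verify it by writing $f^2 \le \kappa^2|u|^{2p-2}$, then $|u|^{2p-2} = |u|^{q}\cdot|u|^{2p-2-q}$ and, since $2p-2-q$ may be negative, instead interpolate $|u|^{2p-2}$ between $u^2$ and $|u|^q$ via $|u|^{2p-2} \le \theta u^2 + (1-\theta)|u|^q$-type convexity only when $q \le 2p-2 \le 2$, handling the complementary range by the cruder bound $|u|^{2p-2}\le |u|^q$ (valid when $2p-2\ge q$ and $|u|\le 1$) or $|u|^{2p-2}\le u^2$ (when $2p-2\le 2$, always true) — in all cases the outcome is a combination of the two target integrals, which is all that is needed. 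Everything else is a direct invocation of Theorem \ref{thm2}.
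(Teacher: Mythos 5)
There is a genuine gap, and it is fatal to the whole strategy: you try to treat $f((z,t),u)\psi$ as a perturbation of $\Ba u$ and then claim that the leftover term $2CR^\ve C_\eta\int\rho^{-2\alpha-2+\ve}|u|^q\psi^2$, which sits on the \emph{right-hand side} of your chain of inequalities, ``is exactly the second term appearing on the left of \eqref{f10}, so after moving it to the left we obtain'' the desired estimate. Moving a term across an inequality changes its sign: what you actually obtain is $\tfrac12\alpha^2\int\rho^{-2\alpha-4+\ve}u^2\psi \;-\; CR^\ve\int\rho^{-2\alpha-2+\ve}|u|^q\psi^2 \le CR^\ve\int\rho^{-2\alpha}g^2\psi^{-1}$, which is strictly weaker than, not equal to, \eqref{f10}. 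Nor can the $|u|^q$ term be absorbed into the $u^2$ term instead: for $q<2$ and $|u|$ small one has $|u|^q\gg u^2$ pointwise, so no choice of $\alpha$ or $R$ makes $\int\rho^{-2\alpha-2+\ve}|u|^q\psi^2$ controllable by $\alpha^2\int\rho^{-2\alpha-4+\ve}u^2\psi$ for general $u$. This is precisely why the sublinear problem is not perturbative: the inequality \eqref{f10} with $+|u|^q$ on the left is genuinely stronger information than \eqref{est1}, and it is exactly this extra term that drives the proof of Theorem \ref{main4}, so it cannot be generated by Cauchy--Schwarz/Young manipulations of the error. (As side remarks: \eqref{a0} forces $p\le q$, not $q\le p$ as you wrote, and $|u|^{2p-2}\le u^2$ is false for $|u|\le 1$ since $2p-2<2$; also $\|u\|_{L^\infty}\le1$ is not a hypothesis of the Carleman estimate itself.)

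The idea you are missing is that the sublinear term must be used \emph{with its sign}, through the cross term in $(a+b)^2\ge a^2+2ab$. The paper sets $u=\rho^\beta v$ with $\beta=\frac{2\alpha+4-Q}{2}$ as in Theorem \ref{thm2}, takes $a=2\beta\rho^{\beta-2}Zv\,\psi$ and puts $f((z,t),\rho^\beta v)\psi$ into $b$, so that the new contribution is $4\beta\int\rho^{-2\alpha+\beta-2}f((z,t),\rho^\beta v)\,Zv\,\psi^2$. Writing $f(\cdot,\rho^\beta v)\rho^\beta Zv$ in terms of $Z\big(F((z,t),\rho^\beta v)\big)$, integrating by parts with $\operatorname{div}(\rho^{-2\alpha-2}Z)=-2(\beta-1)\rho^{-2\alpha-2}$ and $Z\psi=0$, and using the structural hypotheses $sf\le qF$, $\langle\nabla_{(z,t)}F,Z\rangle\le C_2F$ from \eqref{a1}, one finds this cross term is bounded below by $4\beta^2\big((2-q)-(C_2+2)\beta^{-1}\big)\int\rho^{-2\alpha-2}F((z,t),\rho^\beta v)\psi^2$, which is positive for $\beta$ large \emph{because} $q<2$; the lower bound $F(\cdot,s)\ge c_0|s|^q$ from \eqref{a0} then converts it into $C\alpha^2\int\rho^{-2\alpha-2}|u|^q\psi^2$ on the correct side. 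Your proposal never uses the conditions $sf\le qF$, $|\nabla_{(z,t)}F|\le KF$, or $F\ge c_0|s|^q$ in this monotone way, and without them (indeed, for $f=-|u|^{q-2}u$, where the conclusion fails) a purely perturbative argument of the kind you describe cannot work.
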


\begin{proof}
The proof is similar to that of Theorem \ref{thm2} except that we additionally exploit the specific  nature of the sublinearity $f((z,t), u)$.  As before, we let $u= \rho^{\beta}v$ where $\alpha$ and $\beta$ related by \eqref{choice}.  In terms of $v$, we  have
\begin{align}\label{f11}
\Ba u + f((z,t), u) \psi &= \beta(\beta+Q-2)\rho^{\beta-2} v \psi +2\beta \rho^{\beta-2} Zv \psi+ \Ba v \rho^{\beta} + f((z,t), \rho^\beta v)  \psi.
\end{align}
The integral 
\[
\int \rho^{-2\alpha}(\Ba u + f((z,t), u) \psi)^2 \psi^{-1}
\]
is now estimated from below using again the inequality $(a+b)^2 \geq a^2 + 2ab$, with $a= 2\beta \rho^{\beta-2} Zv \psi$ and $b$ being the rest of the terms in \eqref{f11}. We note that all the other terms are handled precisely as in the proof of Theorem \ref{thm2}, with the exception of    
\begin{equation*}
4 \beta \int \rho^{-2\alpha + \beta -2} f((z,t),\rho^\beta v) Zv \psi^2.
\end{equation*}
This integral occurs for the presence of the sublinear term in \eqref{sub}, and our next goal is to bound it from below. With this in mind, we observe that since $F$ is the antiderivative of $f$ in the $s$ variable, we have
\begin{align*}
& Z (F((z,t), \rho^{\beta}v))= f((z,t),\rho^\beta v)  \rho^{\beta} Zv + \beta f((z,t),\rho^{\beta} v) \rho^{\beta} v +
 <\nabla_{(z,t)} F((z,t),\rho^{\beta} v),Z>.
\end{align*}
This identity gives
\begin{align}\label{m10}
&4 \beta \int \rho^{-2\alpha + \beta -2}    f((z,t), \rho^\beta v) Zv \psi^2= 4 \beta \int\rho^{-2\alpha-2} Z (F((z,t),\rho^{\beta}v)) \psi^2
\\
& - 4\beta^2 \int \rho^{-2\alpha -2 } f((z,t), \rho^{\beta}v) \rho^{\beta} v \psi^2  -4 \beta \int \rho^{-2\alpha-2} < \nabla_{(z,t)} F((z,t), \rho^{\beta}v), Z>  \psi^2.
\notag
\end{align}
From the third condition in \eqref{a1} one has 
\begin{equation}\label{i8}
f((z,t),\rho^{\beta}v) \rho^{\beta}v \leq q F((z,t),\rho^{\beta} v),\ \ \ \ \ \ \ \big<\nabla_{(z, t)} F, Z\big> \leq  C_2 F,
\end{equation}
where $C_2$ depends on $m,k, \gamma$, and on $K$ in \eqref{a1}. Using \eqref{i8} in \eqref{m10}, we obtain 
\begin{align}\label{m11}
&4 \beta  \int \rho^{-2\alpha + \beta -2}   f((z,t), \rho^\beta v) Zv \psi^2  \geq 4 \beta \int Z(F((z,t), \rho^{\beta}v))\rho^{-2\alpha-2} \psi^2
\\
& - 4\beta^2 q  \int \rho^{-2\alpha -2} F((z,t), \rho^\beta v) \psi^2- 4 C_2 \beta \int \rho^{-2\alpha-2} F((z,t), \rho^{\beta} v)  \psi^2.
\notag
\end{align}
Now, the first term in the right-hand side of \eqref{m11}
is handled using integration by parts as follows 
\begin{align}\label{m13}
&4 \beta \int Z \big( F((z,t), \rho^{\beta}v) \big)\rho^{-2\alpha-2} \psi^2 = - 4 \beta \int F((z,t), \rho^{\beta} v) \operatorname{div}(\rho^{-2\alpha-2} Z) \psi^2
\\
& = 8\beta (\beta -1) \int \rho^{-2\alpha-2} F((z,t), \rho^{\beta} v)  \psi^2
\notag
\end{align}
We note that in the last equality in \eqref{m13} we have used \eqref{Zpsi}, which gives $Z\psi^2 \equiv 0$, and (iii) in Proposition \ref{Est} along with \eqref{choice}, to assert that
\[
\operatorname{div}(\rho^{-2\alpha-2} Z)= -2(\beta-1)\rho^{-2\alpha-2}.
\]
Using \eqref{a0} and \eqref{m13}  in \eqref{m11}, we conclude
\begin{align*}
 &4 \beta \int \rho^{-2\alpha + \beta -2} f((z,t), \rho^\beta v) Zv  \psi^2  \geq 4 \beta^2((2-q) - (C_2 +2)\beta^{-1})   \int \rho^{-2\alpha-2} F((z,t),\rho^{\beta} v) \psi^2.
 \end{align*}
Keeping in mind that $2-q>0$, if we choose $\beta>\frac{2(C_2 +2)}{2-q}$, then $(2-q) - (C_2 +2)\beta^{-1} > \frac{2-q}2$, and using the bound from below for $F$ in \eqref{a0}, and that from above in \eqref{sim}, we find
 \begin{align*}
 & 4 \beta \int \rho^{-2\alpha + \beta -2} f((z,t), \rho^\beta v) Zv  \psi^2  \ge C \alpha^2 \int \rho^{-2\alpha-2} |u|^q \psi^2,
 \end{align*}
for $\alpha$ sufficiently large, depending on $q, C_2$, and for some universal $C>0$ depending on $q$ and  $c_0$. 
This estimate accounts for the second term in the left-hand side of \eqref{f10}. Since, as we have said above, the first term is obtained exactly as in the proof of Theorem \ref{thm2}, this completes the proof of \eqref{f10}. 

\end{proof}

With Theorem \ref{sub1} in hand, we now proceed to proving the main result in this section. 

\begin{proof}[ Proof of Theorem \ref{main4}]
With $u$ as in the hypothesis of Theorem \ref{main4}, we take $u_\ve= \phi_\ve u$, 
where $\phi_\ve$ is a smooth cutoff such that
\begin{equation}
\begin{cases}
\phi_\ve(z,t) \equiv 1,\ \text{when}\ \ve < \rho(z,t) \leq \frac{1}{2},
\\
\phi_\ve \equiv 0, \ \text{when}\ \rho(z,t) \leq \frac{\ve}{2}, \text{or}\ \rho(z,t) > 1.
\end{cases}
\end{equation}
Using the equation \eqref{sub} satisfied by $u$ we thus find 
\[
\Ba u_\ve + f((z,t), u_\ve) \psi =   2 <Xu, X\phi_\ve> + u \Ba \phi_\ve +   \psi \left(f((z,t), u_\ve) - f((z,t), u) \phi_\ve \right) - Vu_{\ve}.
\]
Applying the Carleman estimate \eqref{f10} with $R=1$, and with $u_\ve$ instead of $u$, using the bound on $V$ in \eqref{vasump} we obtain,
\begin{align*}
& \alpha^2 \int \big(\rho^{-2\alpha-4+\delta}u_\ve^2 \psi+ \rho^{-2\alpha-2} |u_\ve|^q \psi^2 \big) 
\\
& \leq C  \int \rho^{-2\alpha} \big( |Xu|^2 |X\phi_\ve|^2 \psi^{-1}  + u^2 |\Ba \phi_\ve|^2 \psi^{-1}  + (f((z,t), u_\ve) - f((z,t), u) \phi_\ve)^2 \psi + K^2 u_\ve^2 \psi\big).
\end{align*}
We now observe that if $\alpha$ is large enough, depending on $K$, then the integral contianing the last term in the right-hand side of this inequality can be absorbed in the left-hand side, obtaining
\begin{align}\label{f13}
& \alpha^2 \int \big(\rho^{-2\alpha-4+\delta}u_\ve^2 \psi+ \rho^{-2\alpha-2} |u_\ve|^q \psi^2 \big) 
\\
& \leq C  \int \rho^{-2\alpha} \big( |Xu|^2 |X\phi_\ve|^2 \psi^{-1}  + u^2 |\Ba \phi_\ve|^2 \psi^{-1}  + (f((z,t), u_\ve) - f((z,t), u) \phi_\ve)^2 \psi\big).
\notag
\end{align}
Next, we observe that the following energy estimate holds for any $0<r<1/2$, for some universal $C>0$,
\[
\int_{B_r } |Xu|^2 \leq  \frac{C}{r^2} \int_{B_{2r} } ( u^2 + |u|^p) \psi
\]
Its proof is completely similar to that of Lemma \ref{energy}, if one uses the bounds in \eqref{vasump}, \eqref{a1} and \eqref{a0}.
From this energy estimate it is immediate to see that $|Xu|$ also  vanishes to infinite order at the origin, i.e., as $r\to 0^+$, for any $\ell\in \mathbb N$ one has 
\begin{equation}\label{vx}
\int_{B_r} |Xu|^2 = O(r^\ell). 
\end{equation}
Noting that the functions  $X\phi_\ve, \Ba \phi_\ve$ are supported in $\{\frac{\ve}{2} < \rho(z, t) < \ve\} \cup \{ \frac{1}{2} < \rho(z,t) < 1\}$, and for some universal $C>0$ satisfy the bounds  
\begin{equation}\label{a2}
\begin{cases}
|X\phi_\ve| \leq \frac{C \psi^{1/2}}{\ve}, \ |\Ba \phi_\ve| \leq \frac{C \psi}{\ve^2}, \ \text{when} \ \frac{\ve}{2} < \rho(z,t) < \ve,
\\
|X\phi_\ve| \leq C \psi^{1/2}, \ |\Ba \phi_\ve| \leq C \psi,\ \text{when} \ \frac{1}{2} < \rho(z,t) < 1.
\end{cases}
\end{equation}
from the vanishing to infinite order of $u$ and $Xu$, and from \eqref{a2}, we can assert that as $\ve \to 0$,  
\[
\int_{B_{\ve} \setminus B_{\frac{\ve}{2}}} \rho^{-2\alpha} \big( |Xu|^2 |X\phi_\ve|^2 \psi^{-1}  + u^2 |\Ba \phi_\ve|^2 \psi^{-1 }+ u_{\ve}^2 \psi + (f((z,t), u_\ve) - f((z,t), u) \phi_\ve)^2 \psi \big)  \to 0, 
\]
and
\[
\int_{B_{\ve} \setminus B_{\frac{\ve}{2}}} \rho^{-2\alpha-4+\delta}u_\ve^2 \psi+ \rho^{-2\alpha-2} |u_\ve|^q \psi^2   \to 0.
\]
Therefore, if we denote by $\phi_0$ the pointwise limit of $\phi_\ve$, letting $\ve \to 0$ in \eqref{f13}, we consequently obtain for $u_0= \phi_0 u $ the following inequality
\begin{align}\label{f14}
& \int \left[\rho^{-2\alpha-4+\delta}u_0^2 \psi+ \rho^{-2\alpha-2} |u_0|^q \psi^2 \right]  
\\
& \leq C \, \int \rho^{-2\alpha} ( |Xu|^2 |X\phi_0|^2 \psi^{-1}  + u^2 |\Ba \phi_0|^2 \psi^{-1}  + (f((z,t), u_0) - f((z,t), u) \phi_0)^2  \psi)
\notag
\end{align}
Noting that the integrals in the right-hand side of  \eqref{f14} are supported in the region $\{1/2 < \rho(z, t)  < 1\}$. On the other hand, we have $u_0 \equiv u$ when $\rho(z,t) < \frac{1}{2}$. Consequently, we minorise the integral in the left-hand side of \eqref{f14} if we take $B_{1/4}$ as set of integration. Using in \eqref{a2} the derivative bounds corresponding to $\phi_0$, we finally conclude with the estimate
\begin{align*}
& 4^{2 \alpha +2} \int_{B_{1/4}} u^2 \psi
\leq C  2^{2\alpha}   \int_{B_{1} \setminus B_{1/2}} ( |Xu|^2  + u^2 \psi  + (f((z,t), u_0) - f((z,t), u) \phi_0)^2  \psi).
\end{align*}
Letting $\alpha \to  \infty$ in this inequality, we infer $u \equiv 0$ in $B_{1/4}$. Since the operator $\Ba$ is translation invariant in the variable $t$,  by a standard  covering argument we obtain $u \equiv 0$ for  $|z| < \frac{1}{4}$. Since in the region $|z|>\frac{1}{4}$ the operator $\Ba$ is uniformly elliptic with Lipschitz coefficients, applying  the results from \cite{Ru} and \cite{SW}, we can finally deduce  that $u \equiv 0$ in $B_1$. 

\end{proof}

In closing, we remark that is easy to see  that the  Carleman estimate \eqref{f10} also implies the strong unique continuation property for sublinear equations when the  potential $V$ satisfies the  following growth condition
\[
|V(z,t)| \leq \frac{C \psi}{\rho^{2-\delta}}
\]
for some $C, \delta >0$.

\end{document}